\author[D. Airey]{Dylan Airey}
\address[D. Airey]{
Department of Mathematics, University of Texas at Austin, 2515 Speedway, Austin, TX 78712-1202, USA}
\email{dylan.airey@utexas.edu}
\author[B. Mance]{Bill Mance}
\address[B. Mance]{Department of Mathematics, University of North Texas, General Academics Building 435, 1155 Union Circle,  \#311430, Denton, TX 76203-5017, USA}
\email{mance@unt.edu}
\author[J. Vandehey]{Joseph Vandehey}
\address[J. Vandehey]{Department of Mathematics, University of Georgia at Athens, Boyd graduate studies research center, Athens, GA 30606 USA}
\email{vandehey@uga.edu}
\thanks{Research of the first and second authors is partially supported by the U.S. NSF grant DMS-0943870.  We would like to thank Samuel Roth for posing the problem that led to \reft{main} and \reft{computability} to the second author  at the 2012 RTG conference: Logic, Dynamics and Their Interactions, with a Celebration of the Work of Dan Mauldin in Denton, Texas.  He asked if it is true that $nx \in \NQ$ for all natural numbers $n$ implies that $x \in \DNQ$.}
\title[Normality preserving operations part II]{Normality preserving operations for Cantor series expansions and associated fractals part II}
\date{\today}
\newtheorem{thm}{Theorem}[section]
\newtheorem{cor}[thm]{Corollary}
\newtheorem{lem}[thm]{Lemma}
\newtheorem{problem}[thm]{Problem}
\newtheorem{definition}[thm]{Definition}
\newcommand{\labeq}[1]{\label{eq:#1}}
\newcommand{\refeq}[1]{(\ref{eq:#1})}
\newcommand{\labt}[1]{\label{thm:#1}}
\newcommand{\reft}[1]{Theorem~\ref{thm:#1}}
\newcommand{\refl}[1]{Lemma~\ref{lemma:#1}}
\newcommand{\labd}[1]{\label{definition:#1}}
\newcommand{\dimh}[1]{\hbox{$\dim_{\hbox{H}}$}\left( #1\right)}
\newcommand{\NN}{\mathbb{N}_2^{\mathbb{N}}}
\newcommand{\wrt}[1]{\hbox{ w.r.t. }#1}
\newcommand{\wrtQ}{\hbox{ w.r.t. }Q}
\newcommand{\floor}[1]{\left\lfloor #1 \right\rfloor} 
\newcommand{\br}[1]{\left\{ #1 \right\}}
\newcommand{\pr}[1]{\left( #1 \right)}
\newcommand{\NQ}{\mathscr{N}(Q)}
\newcommand{\N}[1]{\mathscr{N}( #1 )}
\newcommand{\Nk}[2]{\mathscr{N}_{#2}( #1 )} 
\newcommand{\DNQ}{\mathscr{DN}(Q)}
\newcommand{\RNQ}{\mathscr{RN}(Q)}
\newcommand{\RDN}{\RNQ \cap \DNQ \backslash \NQ}
\newcommand{\taursx}{\tau_{r,s}(x)}
\newcommand{\urset}{\Xi(Q)}
\begin{document}

\maketitle

\begin{abstract}
We investigate how non-zero rational multiplication and rational addition affect normality with respect to $Q$-Cantor series expansions. In particular, we show that there exists a $Q$ such that the set of real numbers which are $Q$-normal but not $Q$-distribution normal, and which still have this property when multiplied and added by rational numbers has full Hausdorff dimension. Moreover, we give such a number that is explicit in the sense that it is computable. 
\end{abstract}

\section{Introduction}
Let $\N{b}$ be the set of numbers normal in base $b$ and let $f$ be a  function from $\mathbb{R}$ to $\mathbb{R}$.  We say that $f$ {\it preserves $b$-normality} if $f(\N{b}) \subseteq \N{b}$.  We can make a similar definition for preserving normality with respect to continued fraction expansions, $\beta$-expansions, the L\"uroth series expansion, etc.

Several authors have studied $b$-normality preserving functions.  Some $b$-normality preserving functions naturally arise in  H. Furstenberg's work on disjointness in ergodic theory\cite{FurstenbergDisjoint}.  V. N. Agafonov \cite{AgafonovNormal}, T. Kamae \cite{Kamae}, T. Kamae and B. Weiss \cite{KamaeWeiss}, and W. Merkle and J. Reimann \cite{MerkleReimann} studied $b$-normality preserving selection rules.

For a real number $r$, define real functions $\pi_r$ and $\sigma_r$ by $\pi_r(x)=rx$ and $\sigma_r(x)=r+x$.  In 1949 D. D. Wall proved in his Ph.D. thesis \cite{Wall} that for non-zero rational $r$ the function $\pi_r$ is $b$-normality preserving for all $b$ and that the function $\sigma_r$ is $b$-normality preserving functions for all $b$ whenever $r$ is rational.  These results were also independently proven by K. T. Chang in 1976 \cite{ChangNormal}.  D. D. Wall's method relies on the well known characterization that a real number $x$ is normal in base $b$ if and only if the sequence $(b^nx)$ is uniformly distributed mod $1$ that he also proved in his Ph.D. thesis.

D. Doty, J. H. Lutz, and S. Nandakumar took a substantially different approach from D. D. Wall and strengthened his result.  They proved in \cite{LutzNormalityPreserves} that for every real number $x$ and every non-zero rational number $r$  the $b$-ary expansions of $x, \pi_r(x),$ and $\sigma_r(x)$ all have the same finite-state dimension and the same finite-state strong dimension.  It follows that $\pi_r$ and $\sigma_r$ preserve $b$-normality.  It should be noted that their proof uses different methods from those used by D. D. Wall and is unlikely to be proven using similar machinery.

C. Aistleitner generalized D. D. Wall's result on $\sigma_r$.  Suppose that $q$ is a rational number and that the digits of the $b$-ary expansion of $z$ are non-zero on a set of indices of density zero.  In \cite{AistleitnerNormalPreserves} he proved that the function $\sigma_{qz}$ is $b$-normality preserving.  
It was shown in \cite{AireyManceNormalPreserves} that  C. Aistleitner's result does not generalize to at least one notion of normality for some of the Cantor series expansions.
%Ainstler result extending Wall in uniform dist journal.  Say the few things he did (look it up)  \cite{AistleitnerNormalPreserves}.  Also add Volkmann and Maxfield references.  Maybe above for Maxfield.  Also did Ainstler improve on Pellegrino or how should that be included?

There are still many open questions relating to the functions $\pi_r$ and $\sigma_r$.  For example, M. Mend\'{e}s France asked in \cite{X} if the function $\pi_r$ preserves simple normality with respect to the regular continued fraction for every non-zero rational $r$.  The authors are unaware of any theorems that state that either $\pi_r$ or $\sigma_r$ preserve any other form of normality than $b$-normality.

In this paper we will be interested in the function $\tau_{r,s}=\sigma_s \circ \pi_r$ for $r\in \mathbb{Q}\backslash \{0\}$ and $s\in \mathbb{Q}$, and how this function preserves certain notions of normality of $Q$-Cantor series expansions, namely $Q$-normality and $Q$-distribution normality. (We will provide definitions for all these terms in Section \ref{sec:Cantor}.) In Theorem \ref{thm:main}, we will show that there exists a basic sequence $Q$ and a real number $x$ such that $\taursx$ is always $Q$-normal and always \emph{not} $Q$-distribution normal; in fact, we will show that for this $Q$, the set of $x$ with this property is big in the sense that it has full Hausdorff dimension. 
It was first shown in \cite{AlMa} that the set of numbers that are $Q$-normal but not $Q$-distribution normal is non-empty for some basic sequences $Q$, but no indication was given to the size of this set.  
For a specific basic sequence $Q$, we show that there exists a subset $\urset$  of the set of $Q$-normal numbers that is invariant under $\tau_{r,s}$ for every $r\in \mathbb{Q}\backslash \{0\}$ and $s\in \mathbb{Q}$ (i.e.  $\tau_{r,s}\left(\urset\right)=\urset$) and  has full Hausdorff dimension.  
Related questions for the Cantor series expansions are studied in \cite{AireyManceNormalPreserves}.

It is an interesting question to know how explicit this $x$ and $Q$ are, so  we bring in some definitions from recursion theory.  A real number $x$ is \textit{computable} if there exists $b \in \mathbb{N}$ with $b\geq 2$ and a total recursive function $f: \mathbb{N} \to \mathbb{N}$ that calculates the digits of $x$ in base $b$. A sequence of real numbers $(x_n)$ is \textit{computable} if there exists a total recursive function $f: \mathbb{N}^2 \to \mathbb{Z}$ such that for all $m,n$ we have that $\frac{f(m,n)-1}{m} < x_n< \frac{f(m,n)-1}{m}$.

%A real number is \textit{absolutely normal} if it is normal in base $b$ for all bases $b$.
M. W. Sierpi\'{n}ski gave an example of an absolutely normal number that is not computable in \cite{Sierpinski}.
The authors feel that examples such as M. W. Sierpi\'{n}ski's are not fully explicit since they are not computable real numbers, unlike Champernowne's number. 
A. M. Turing gave the first example of a computable absolutely normal number in an unpublished manuscript.  This paper may be found in his collected works \cite{Turing}. See \cite{BecherFigueiraPicchi} by V. Becher, S. Figueira, and R. Picchi for further discussion.
%\footnote{The $n$'th digit of A. M. Turing's number may be computed with an algorithm that is doubly exponential in $n$. V. Becher, P. A. Heiber, and T. A. Slaman constructed an absolutely normal number in \cite{BecherHeiberSlaman} whose digits may be computed in polynomial time.}. 
In Theorem \ref{thm:computability} we give a basic sequence $Q$ and real number $x$, with $x$ in the set discussed in \reft{main}, that are fully explicit in the sense that they are computable as a sequence of integers and a real number, respectively.

%related question involving fractals studied in vandehey paper  $\N{b}$

Throughout this paper we will use a number of standard asymptotic notations. By $f(x)=O(g(x))$ we mean that there exists some real number $C>0$ such that $|f(x)| \le C |g(x)|$. By $f(x) \asymp g(x)$, we mean $f(x)=O(g(x))$ and $g(x)=O(f(x))$. By $f(x)=o(g(x))$, we mean that $f(x)/g(x) \to 0$ as $x\to \infty$.

\section{Cantor series expansions}\label{sec:Cantor}

The study of normal numbers and other statistical properties of real numbers with respect to large classes of Cantor series expansions was  first done by P. Erd\H{o}s and A. R\'{e}nyi in \cite{ErdosRenyiConvergent} and \cite{ErdosRenyiFurther} and by A. R\'{e}nyi in \cite{RenyiProbability}, \cite{Renyi}, and \cite{RenyiSurvey} and by P. Tur\'{a}n in \cite{Turan}.

%\cite{Renyi,RenyiSurvey,ErdosRenyiConvergent,ErdosRenyiFurther,RenyiProbability}
The $Q$-Cantor series expansions, first studied by G. Cantor in \cite{Cantor},
are a natural generalization of the $b$-ary expansions.\footnote{G. Cantor's motivation to study the Cantor series expansions was to extend the well known proof of the irrationality of the number $e=\sum 1/n!$ to a larger class of numbers.  Results along these lines may be found in the monograph of J. Galambos \cite{Galambos}. } %   See also \cite{TijdemanYuan} and \cite{HT}.  }
Let $\mathbb{N}_k:=\mathbb{Z} \cap [k,\infty)$.  If $Q \in \NN$, then we say that $Q$ is a {\it basic sequence}.
% if each $q_n$ is an integer greater than or equal to $2$.
Given a basic sequence $Q=(q_n)_{n=1}^{\infty}$, the {\it $Q$-Cantor series expansion} of a real number $x$  is the (unique)\footnote{Uniqueness can be proven in the same way as for the $b$-ary expansions.} expansion of the form
\begin{equation} \labeq{cseries}
x=E_0+\sum_{n=1}^{\infty} \frac {E_n} {q_1 q_2 \cdots q_n}
\end{equation}
where $E_0=\floor{x}$ and $E_n$ is in $\{0,1,\ldots,q_n-1\}$ for $n\geq 1$ with $E_n \neq q_n-1$ infinitely often. We abbreviate \refeq{cseries} with the notation $x=E_0.E_1E_2E_3\cdots$ w.r.t. $Q$.

A {\it block} is an ordered tuple of non-negative integers, a {\it block of length $k$} is an ordered $k$-tuple of integers, and {\it block of length $k$ in base $b$} is an ordered $k$-tuple of integers in $\{0,1,\ldots,b-1\}$.

Let
$$
Q_n^{(k)}:=\sum_{j=1}^n \frac {1} {q_j q_{j+1} \cdots q_{j+k-1}} \hbox{ and }  T_{Q,n}(x):=\left(\prod_{j=1}^n q_j\right) x \pmod{1}.
$$
A. R\'enyi \cite{Renyi} defined a real number $x$ to be {\it normal} with respect to $Q$ if for all blocks $B$ of length $1$,
\begin{equation}\labeq{rnormal}
\lim_{n \rightarrow \infty} \frac {N_n^Q (B,x)} {Q_n^{(1)}}=1,
\end{equation}
where $N_n^Q(B,x)$ is the number of occurences of the block $B$ in the sequence $(E_i)_{i=1}^n$ of the first $n$ digits in the $Q$-Cantor series expansion of $x$. If $q_n=b$ for all $n$ and we restrict $B$ to consist of only digits less than $b$, then \refeq{rnormal} is equivalent to {\it simple normality in base $b$}, but not equivalent to {\it normality in base $b$}. 
A basic sequence $Q$ is {\it $k$-divergent} if
$\lim_{n \rightarrow \infty} Q_n^{(k)}=\infty$ and {\it fully divergent} if $Q$ is $k$-divergent for all $k$. % and {\it $k$-convergent} if it is not $k$-divergent.  
A basic sequence $Q$ is {\it infinite in limit} if $q_n \rightarrow \infty$.

\begin{definition}\labd{1.7} A real number $x$  is {\it $Q$-normal of order $k$} if for all blocks $B$ of length $k$,
$$
\lim_{n \rightarrow \infty} \frac {N_n^Q (B,x)} {Q_n^{(k)}}=1.
$$
We let $\Nk{Q}{k}$ be the set of numbers that are $Q$-normal of order $k$.  The real number $x$ is {\it $Q$-normal} if
$x \in \NQ := \bigcap_{k=1}^{\infty} \Nk{Q}{k}.$
A real number~$x$ is {\it $Q$-distribution normal} if
the sequence $(T_{Q,n}(x))_{n=0}^\infty$ is uniformly distributed mod $1$.  Let $\DNQ$ be the set of $Q$-distribution normal numbers.
\end{definition}

It  follows from a well known result of H. Weyl \cite{Weyl2,Weyl4} that $\DNQ$ is a set of full Lebesgue measure for every basic sequence $Q$. We will need the following results of the second author \cite{Mance4} later in this paper.

\begin{thm}\labt{measure}\footnote{Early work in this direction has been done by A. R\'enyi \cite{Renyi}, T.  \u{S}al\'at \cite{Salat4}, and F. Schweiger \cite{SchweigerCantor}.}
Suppose that $Q$ is infinite in limit.  Then $\Nk{Q}{k}$ (resp. $\NQ$) is of full measure if and only if $Q$ is $k$-divergent (resp. fully divergent). 
\end{thm}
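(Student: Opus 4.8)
The plan is to reduce the statement to the elementary fact that, under Lebesgue measure $\la$ on $[0,1)$, the digit sequence $(E_n)_{n\ge1}$ of the $Q$-Cantor series expansion is a sequence of \emph{independent} random variables with $E_n$ uniformly distributed on $\{0,1,\dots,q_n-1\}$ (the pushforward of $\la$ under $x\mapsto(E_n)$ is the corresponding product measure, exactly as for $b$-ary expansions), and then to run a second-moment argument followed by Borel--Cantelli. Fix $k$ and a block $B=(b_1,\dots,b_k)$, and for $j\ge1$ let $X_j$ be the indicator of the event $(E_j,\dots,E_{j+k-1})=B$, so that $N_n^Q(B,x)=\sum_{j=1}^{n-k+1}X_j$. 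Since $Q$ is infinite in limit, each $b_i$ is a legal digit in position $j+i-1$ for all but finitely many $j$, so $\la(X_j=1)=1/(q_j\cdots q_{j+k-1})$ for large $j$, whence $\mathbb{E}\,N_n^Q(B,x)=\qnk+O(1)$; combined with $k$-divergence, i.e.\ $\qnk\to\infty$, this gives $\mathbb{E}\,N_n^Q(B,x)\sim\qnk$.

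For the ``if'' direction I would next bound the variance. The variables $(X_j)$ are $(k-1)$-dependent: $X_j$ and $X_{j'}$ are independent once $|j-j'|\ge k$, while for $0<|j-j'|<k$ the event $X_j=X_{j'}=1$ forces $k+|j-j'|$ consecutive digits, so $\mathbb{E}(X_jX_{j'})\le\la(X_j=1)\,\eta_j$ where $\eta_j\to0$ as $j\to\infty$ because $q_n\to\infty$. Summing the (at most $2(k-1)$ per $j$) covariances against the diagonal then yields $\mathrm{Var}\,N_n^Q(B,x)=O(\qnk)$. By Chebyshev, $\la\bigl(|N_n^Q(B,x)-\mathbb{E}\,N_n^Q(B,x)|>\e\,\qnk\bigr)=O(1/\qnk)$; choosing $n_i=\min\{n:\qnk\ge i^2\}$ makes $Q_{n_i}^{(k)}\asymp i^2$ and these probabilities summable, so Borel--Cantelli gives $N_{n_i}^Q(B,x)/Q_{n_i}^{(k)}\to1$ for $\la$-a.e.\ $x$. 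Since $n\mapsto N_n^Q(B,x)$ and $n\mapsto\qnk$ are nondecreasing and $Q_{n+1}^{(k)}-\qnk=1/(q_{n+1}\cdots q_{n+k})\to0$ (so $Q_{n_{i+1}}^{(k)}/Q_{n_i}^{(k)}\to1$), a monotonicity sandwich upgrades this to $N_n^Q(B,x)/\qnk\to1$ for $\la$-a.e.\ $x$. Intersecting over the countably many blocks $B$ of length $k$ gives $\la(\Nk{Q}{k})=1$, and intersecting further over all $k$ gives $\la(\NQ)=1$ when $Q$ is fully divergent.

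For the ``only if'' direction, suppose $Q$ is not $k$-divergent; then $\qnk$ increases to a finite limit $L\ge 1/(q_1\cdots q_k)>0$, i.e.\ $\sum_{j\ge1}1/(q_j\cdots q_{j+k-1})<\infty$. Take $B$ to be the all-zeros block of length $k$. Picking $m$ with $\sum_{j\ge m}1/(q_j\cdots q_{j+k-1})<1/2$, a union bound gives $\la(\text{some }X_j=1,\ j\ge m)<1/2$; the complementary event depends only on $(E_n)_{n\ge m}$, hence is independent of $\{E_1=\dots=E_{m-1}=1\}$, which has positive measure since every $q_i\ge2$, and on the intersection all $X_j$ vanish, so $N_n^Q(B,x)=0$ for every $n$. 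On this positive-measure set $N_n^Q(B,x)/\qnk\equiv0\not\to1$, so $\Nk{Q}{k}$ is not of full measure; consequently, if $Q$ is not fully divergent, choosing such a $k$ shows $\NQ\subseteq\Nk{Q}{k}$ is not of full measure either.

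I expect the only genuinely delicate point to be the variance estimate, specifically controlling the covariances coming from overlapping occurrences of $B$ and then performing the standard but fussy passage from convergence along the subsequence $(n_i)$ to convergence along all $n$. The hypothesis that $Q$ is infinite in limit enters precisely there, both in making the overlap contributions $\eta_j$ vanish and in discarding the finitely many indices at which the chosen block is illegal, and it is essential: without it $\Nk{Q}{k}$ can be empty even when $Q$ is $k$-divergent.
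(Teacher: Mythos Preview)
The paper does not actually prove this theorem; it is quoted from \cite{Mance4} (``We will need the following results of the second author \cite{Mance4} later in this paper''), so there is no in-paper argument to compare against.

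Your argument is the standard and correct one. The digit variables $E_n$ are independent and uniform on $\{0,\dots,q_n-1\}$ under Lebesgue measure, the indicators $X_j$ are $(k-1)$-dependent, and the infinite-in-limit hypothesis makes the overlap covariances $o(\mathbb{E}X_j)$, yielding $\mathrm{Var}\,N_n^Q(B,x)=O(Q_n^{(k)})$; Chebyshev along the subsequence $n_i=\min\{n:Q_n^{(k)}\ge i^2\}$ plus the monotone sandwich then gives the a.e.\ limit. For the converse, your construction of a positive-measure set on which the all-zeros block never occurs is clean; one could alternatively invoke the first Borel--Cantelli lemma directly to see that $N_n^Q(B,x)$ is a.e.\ eventually constant and hence cannot have ratio tending to $1$ on a full-measure set, but your explicit cylinder-times-tail argument is equally valid. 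The only minor quibble is cosmetic: in the variance step you assert $\mathrm{Var}=O(Q_n^{(k)})$ when in fact the off-diagonal contribution is $o(Q_n^{(k)})$ and the diagonal is at most $Q_n^{(k)}$, so the bound holds with implied constant $1+o(1)$; this of course does not affect the Borel--Cantelli application.
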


We note the following simple theorem.

\begin{thm}\label{thm:distributionnormal}
Suppose that $Q$ is infinite in limit. Then $x=E_0.E_1E_2\dots$ is $Q$-distribution normal if and only if the sequence $(E_n/q_n)_{n=1}^\infty$ is uniformly distributed modulo 1.
\end{thm}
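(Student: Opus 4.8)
The plan is to prove the equivalence directly from the definitions by relating the orbit $(T_{Q,n}(x))_n$ to the tail sums of the Cantor series expansion. First I would write, for each $n \geq 1$,
\[
T_{Q,n}(x) = \left(\prod_{j=1}^n q_j\right) x \pmod 1 = \sum_{m=n+1}^{\infty} \frac{E_m}{q_{n+1} q_{n+2} \cdots q_m} \pmod 1,
\]
using \refeq{cseries}: multiplying by $q_1 \cdots q_n$ kills the integer part and the first $n$ digit terms, leaving exactly this tail. The key observation is that this tail equals $\frac{E_{n+1}}{q_{n+1}} + R_n$, where $0 \le R_n = \frac{1}{q_{n+1}}\sum_{m=n+2}^\infty \frac{E_m}{q_{n+2}\cdots q_m} \le \frac{1}{q_{n+1}}$ (the bound coming from $E_m \le q_m - 1$ and the standard geometric-type estimate for Cantor series tails). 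Hence $T_{Q,n}(x)$ and $E_{n+1}/q_{n+1}$ differ by at most $1/q_{n+1}$ in the circle metric.

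Next I would invoke the hypothesis that $Q$ is infinite in limit, so $q_{n+1} \to \infty$, which forces $|T_{Q,n}(x) - E_{n+1}/q_{n+1}| \to 0$. Then I would apply the elementary fact that if two sequences in $[0,1)$ satisfy $\|u_n - v_n\| \to 0$ (distance mod $1$), then $(u_n)$ is uniformly distributed mod $1$ if and only if $(v_n)$ is; this follows immediately from Weyl's criterion, since $|e^{2\pi i h u_n} - e^{2\pi i h v_n}| \le 2\pi |h| \, \|u_n - v_n\| \to 0$ for each fixed integer $h$, so the exponential sums have the same asymptotic behavior. Applying this with $u_n = T_{Q,n}(x)$ and $v_n = E_{n+1}/q_{n+1}$ gives that $(T_{Q,n}(x))_{n\ge 1}$ is u.d. mod $1$ iff $(E_{n+1}/q_{n+1})_{n \ge 1}$ is, and the latter is just a shift of $(E_n/q_n)_{n\ge 1}$, which does not affect uniform distribution. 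Finally, prepending the single term $T_{Q,0}(x) = x \pmod 1$ does not change u.d. mod $1$ either, so this matches Definition~\refd{1.7} exactly.

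The main obstacle — really the only delicate point — is getting the tail estimate $R_n \le 1/q_{n+1}$ and the identity for $T_{Q,n}(x)$ correct, including the convention that $E_m \ne q_m - 1$ infinitely often (which guarantees the tail is genuinely in $[0,1)$ and the mod $1$ reductions are literally equalities rather than only congruences). Everything after that is a routine application of Weyl's criterion, which is already cited in the paper. One should also double-check the degenerate-looking case where infinitely many $q_{n+1}$ could be small — but this is precisely excluded by "$Q$ infinite in limit," so the hypothesis is used in an essential way and the proof does not go through for general basic sequences.
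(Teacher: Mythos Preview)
Your proof is correct. The paper does not actually supply a proof of this statement---it is introduced only as a ``simple theorem'' and left unproved---so there is nothing substantive to compare against; your argument via the tail estimate $\bigl|T_{Q,n}(x) - E_{n+1}/q_{n+1}\bigr| \le 1/q_{n+1} \to 0$ combined with the Weyl-criterion fact that two sequences differing by $o(1)$ mod~$1$ are simultaneously uniformly distributed is exactly the standard (and presumably intended) justification.
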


Note that in base~$b$, where $q_n=b$ for all $n$,
 the corresponding notions of $Q$-normality and $Q$-distribution normality are equivalent. This equivalence
is fundamental in the study of normality in base $b$. 

Another definition of normality, $Q$-ratio normality, has also been studied.  We do not introduce this notion here as this set contains the set of $Q$-normal numbers and all results in this paper that hold for $Q$-normal numbers also hold for $Q$-ratio normal numbers.
The complete containment relation between the sets of these normal numbers and pair-wise intersections thereof is proven in \cite{ppq1}.  The Hausdorff dimensions of difference sets such as $\RDN$ are computed in \cite{AireyManceHDDifference}.
%A surprising property of $Q$-normality of order $k$ is that we may not conclude that $\Nk{Q}{k} \subseteq \Nk{Q}{j}$ for all $j <k$ like we may for the $b$-ary expansions.  In fact, it was shown in \cite{ppq2} that for every $k$ there exists a basic sequence $Q$ and a real number $x$ such that $\Nk{Q}{k} \backslash \bigcup_{j=1}^{k-1} \Nk{Q}{j}$ is non-empty.  Thus, rather than showing that some functions do not preserve $Q$-normality of order $k$, we will show that they do not preserve $Q$-normality of any order.  We will always demonstrate numbers not $Q$-normal of any order that either have at most finitely many copies of the digit $0$ or the digit $1$ in their $Q$-Cantor series expansion.
Set
$$
\urset=\br{x=0.E_1E_2\cdots\wrt{Q} : \taursx \in \NQ \backslash \DNQ \ \forall r \in \mathbb{Q}\backslash \{0\}, s \in \mathbb{Q} }.
$$
Our main results of this paper will be the following:

\begin{thm}\label{thm:main}
There exists a basic sequence $Q$ such that the Hausdorff dimension of $\urset$ is $1$.
\end{thm}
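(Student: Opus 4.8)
The plan is to build, by hand, a basic sequence $Q$ together with a Cantor-type family $\mathscr F\subseteq\urset$ of full Hausdorff dimension; then $\dim_H\urset=1$ because $\dim_H\urset\le 1$ trivially. The sequence $Q$ will have a two-scale block structure: split $\mathbb N$ into consecutive blocks $B_1,B_2,\dots$ of growing length, let $S'$ be the union of (say) the first half of each $B_t$ and assign there an enormous, highly divisible value $q_m\ge \operatorname{lcm}(1,\dots,t)\cdot(\#B_t)^{3}$, while on $\mathbb N\setminus S'$ (the second halves) assign a small, slowly growing value such as $q_m=\lfloor\log t\rfloor$. One checks three things about $Q$: it is infinite in limit; because the runs of $\mathbb N\setminus S'$ grow in length while carrying bounded $q_m$, $Q$ is fully divergent, so $\NQ$ is large by \reft{measure}; and yet $S'$ is negligible in every $Q^{(k)}$-weight, $\sum_{m\in S',\,m\le n}1/(q_m\cdots q_{m+k-1})=o(Q_n^{(k)})$, while $S'$ still has positive lower density \emph{as a set of positions}. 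Finally, divisibility of the $S'$-values by $\operatorname{lcm}(1,\dots,t)$ gives, for each fixed $b\in\mathbb N$, that $b\mid q_1\cdots q_m$ for all large $m$.

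\textbf{Transfer identity and the family.} A direct manipulation of the $Q$-Cantor series (peeling off integer parts using the divisibility just arranged) shows that for $r=a/b\in\mathbb Q\setminus\{0\}$, $s\in\mathbb Q$ and all large $m$,
\[
T_{Q,m}(\tau_{r,s}(x))=\tfrac ab E_m(x)+\tfrac ab T_{Q,m}(x)\pmod 1,
\]
and taking one further floor that $E_m(\tau_{r,s}(x))$ is a fixed ``locally determined'' function of $E_{m-1}(x)$ and $E_m(x)$ up to a bounded carry, which — since $q_m\to\infty$ — depends on the later digits of $x$ only through an event of density $o(1)$. At the relevant scale this local rule is a sliding-block-code-type map that is measure preserving for the uniform (maximal-entropy) measure: it sends the uniform distribution on a window of consecutive digits of $x$ to the uniform distribution on the corresponding digit of $\tau_{r,s}(x)$. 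For the family: for $m\in S'$ allow $E_m(x)$ to be any value in $\{0,1,\dots,\lfloor q_m/\log m\rfloor\}$; for $m\notin S'$ run a Champernowne/de Bruijn-type block construction, allowing on each run of $\mathbb N\setminus S'$ only those digit strings that are $\varepsilon$-balanced up to order $k$, with $\varepsilon\to 0$ and $k\to\infty$ along the blocks.

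\textbf{Verification.} (i) \emph{Dimension.} On $S'$ we lose only $\log\log m=o(\log q_m)$ bits per digit; on $\mathbb N\setminus S'$ the balancing costs only $O(\log(\text{run length}))$ bits per run; since $S'$ carries almost all of $\sum_{m\le n}\log q_m$ (its $q_m$ being enormous), the usual mass-distribution estimate gives $\dim_H\mathscr F=1$. (ii) \emph{$\tau_{r,s}(x)\notin\DNQ$.} For $m\in S'$ the smallness of $E_m(x)$ forces $T_{Q,m-1}(x)\to 0$, so by the transfer identity $T_{Q,m-1}(\tau_{r,s}(x))$ approaches the finite set $\{0,1/b,\dots,(b-1)/b\}$ along the positive-density set $S'-1$; a sequence that clusters on a finite set along a set of positive density cannot be uniformly distributed mod $1$ (test against a continuous function vanishing near that finite set with integral close to $1$). (iii) \emph{$\tau_{r,s}(x)\in\NQ$.} Since $S'$ is $Q^{(k)}$-light it contributes nothing asymptotically to any block frequency of $\tau_{r,s}(x)$, while on the runs of $\mathbb N\setminus S'$ the measure-preserving local rule turns the $\varepsilon$-balanced $x$-strings into $\varepsilon'$-balanced $\tau_{r,s}(x)$-strings, with $\varepsilon'\to 0$ along the blocks for each fixed $r,s$ (the extra $O(1)$ discrepancies from floors, carries and block/boundary effects being absorbed); hence $N_n^Q(B,\tau_{r,s}(x))/Q_n^{(k)}\to 1$ for every block $B$ and every $k$. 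Therefore $\mathscr F\subseteq\urset$ and $\dim_H\urset=1$.

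\textbf{Main obstacle.} The crux is step (iii): one must arrange the allowed digit strings so that $\tau_{r,s}(x)$ is $Q$-normal \emph{simultaneously for all $r\in\mathbb Q\setminus\{0\}$ and $s\in\mathbb Q$}. This forces a careful analysis of how multiplication by $a/b$ folds and shifts digits modulo $q_m$, together with the backward propagation of carries, and it is precisely what dictates the two-scale design of $Q$: the enormous, highly divisible values on $S'$ are needed both to make the transfer identity exact for every denominator and to render $S'$ statistically negligible, while the small values on the long runs of $\mathbb N\setminus S'$ are needed to keep $Q$ fully divergent; reconciling all of these simultaneously (and keeping the whole thing computable, for \reft{computability}) is the technical heart of the argument.
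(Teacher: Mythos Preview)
Your overall two-scale architecture is close to the paper's: large, highly divisible bases on a positive-density set $S'$ carry the Hausdorff dimension and the failure of distribution normality, alternating with long runs of small bases that carry $Q$-normality. Your arguments for full dimension (i) and for $\tau_{r,s}(x)\notin\DNQ$ (ii) are essentially correct and match the paper's reasoning.

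The genuine gap is in step (iii), and you flag it yourself as the ``main obstacle'' without actually closing it. The assertion that the ``measure-preserving local rule turns the $\varepsilon$-balanced $x$-strings into $\varepsilon'$-balanced $\tau_{r,s}(x)$-strings'' does not follow from measure preservation: knowing that the pushforward of the uniform measure is uniform tells you that \emph{most} strings map to balanced strings, not that \emph{every} balanced string does. What you would need is a quantitative, finite-string version of Wall's theorem (multiplication by a fixed rational sends every $\varepsilon$-balanced block of length $L$ in base $c$ to an $\varepsilon'$-balanced block, uniformly in $L$), and you have not proved this. There is a second, related problem: you impose divisibility on the \emph{bases} $q_m$ along $S'$ but not on the \emph{digits} $E_m(x)$. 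Without divisibility of the digits, multiplication by $a/b$ does not act block-by-block; the fractional contribution $aE_m(x)/b$ propagates across positions, so the small-base digits of $\tau_{r,s}(x)$ depend on the free $S'$-digits of $x$, and your balance claim would have to be uniform over all those choices as well.

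The paper circumvents this by a different mechanism. On the large-base positions it allows only digits from a set $I_i$ that are both small relative to $\beta_i=(i!)^2$ and divisible by $\lfloor\sqrt i\rfloor!$; on the small-base positions the digits are \emph{fixed} (not free) so that each length-$n_i$ block $B$ has $\overline B$ divisible by $i!$ with $i!\,\overline B<i^{n_i}$, and these blocks run through an entire set $\mathcal L_i$ of size $\ell_i\asymp i^{n_i}/(i!)^2$. The double divisibility-and-smallness constraint is exactly what makes \refl{rationalmult} applicable piece by piece: for any fixed $r$ and all large $i$, multiplication by $r$ sends each finite piece to another finite piece supported on the \emph{same} positions, with no carries between pieces. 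Hence after multiplication the $\ell_i$ small-base blocks remain $\ell_i$ \emph{distinct} length-$n_i$ blocks in base $i$. The paper then never tries to show these particular blocks are balanced; instead it invokes the counting in \refl{epsilonk}: at most $i^{n_i}e^{-n_i^{1/5}}$ of all blocks fail to be $(\epsilon_i,k)$-normal, and since $\ell_i\asymp i^{n_i}/(i!)^2$ dominates this, almost all of the $\ell_i$ blocks are balanced no matter which $\ell_i$ blocks they turn out to be. This ``enumerate a complete set, then count the bad ones'' device is precisely what replaces your unproved preservation-of-balance claim.
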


\begin{thm}\label{thm:computability}
There exists a computable basic sequence $Q$ and a computable real number $x$ in $\urset$.
% such that $\taursx$ is in $ \NQ \backslash \DNQ $ for all $r \in \mathbb{Q}\backslash\{0\}$ and $s\in \mathbb{Q}$, \emph{and} both $Q$ and $x$ are computable.
\end{thm}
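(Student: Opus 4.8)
The plan is to extract \reft{computability} from an effective version of the construction that proves \reft{main}. That proof produces a basic sequence $Q$ together with a Cantor-type set of digit sequences $(E_n)$, each giving a real number $x=0.E_1E_2\cdots$ w.r.t. $Q$ that lies in $\urset$, and then shows the associated set of $x$ has full Hausdorff dimension. For \reft{computability} the dimension statement is irrelevant: it suffices to (i) check that the basic sequence $Q$ produced there is computable, (ii) single out one canonical digit sequence $(E_n)$ from that Cantor set which is computable, and (iii) verify that the resulting $x$ is a computable real number. Membership $x\in\urset$ then needs no extra argument, since by construction \emph{every} point of the Cantor set built for \reft{main} lies in $\urset$.

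For (i): one inspects how $q_n$ is defined in the proof of \reft{main}. The requirements on $Q$ there — that $Q$ is infinite in limit, that the relevant sums $Q_n^{(k)}$ diverge (so \reft{measure} applies), and the growth/block-length conditions that reconcile $Q$-normality of $\taursx$ with the failure of $Q$-distribution normality for all rational $r\neq 0,s$ — can be met by taking $q_n$ to be the least integer in an explicit computable range $[a_n,b_n]$ (depending recursively on $q_1,\dots,q_{n-1}$ and $n$) satisfying a condition on $q_1,\dots,q_n$ that is decidable from those finitely many values. Crucially, the divergence requirements are not obtained from an abstract existence argument but forced by the explicit cap $b_n$, which keeps $\sum 1/q_n$ and the higher $Q_n^{(k)}$ divergent. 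Hence $n\mapsto q_n$ is total recursive and $Q$ is computable.

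For (ii): the digit sequences admitted in \reft{main} are assembled by concatenating, in a prescribed order, blocks drawn from finite menus — long Champernowne-type blocks that keep $\taursx$ $Q$-normal of every order, interleaved at a sparse, computable set of positions with short blocks holding the corresponding $E_n$ near $0$, so that $(E_n/q_n)$ (and the analogous sequence attached to $\taursx$) is not uniformly distributed mod $1$; by \reft{distributionnormal} this is exactly what keeps $\taursx\notin\DNQ$. Making, at every step where the construction leaves a choice, the lexicographically least admissible choice pins down a single sequence $(E_n)$, and $n\mapsto E_n$ is total recursive since the menus, their sizes, and the insertion positions are all computable from $Q$. For (iii): with $q_n,E_n$ computable, $0\le E_n\le q_n-1$ and $q_j\ge 2$ give
$$
0\le x-\sum_{n=1}^{N}\frac{E_n}{q_1\cdots q_n}\le\sum_{n>N}\frac{1}{q_1\cdots q_{n-1}}\le\sum_{n>N}2^{1-n}=2^{1-N},
$$
so truncating at $N=\lceil\log_2 m\rceil+2$ produces a rational within $1/m$ of $x$, i.e. a total recursive fast Cauchy approximation to $x$; since $x$ contains arbitrarily long Champernowne blocks it is not a dyadic rational, so each binary digit of $x$ can be decided in finitely many steps from these approximations, giving a total recursive function computing the base-$2$ expansion of $x$. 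Thus $x$ is a computable real in $\urset$.

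The only genuine work is the auditing in (i)–(ii): arranging the proof of \reft{main} so that every "choose something large enough'' comes with an explicit computable witness, every side condition is decidable on finite data, and the $k$-divergence of $Q$ for every $k$ is built in via an explicit growth cap rather than inferred non-constructively. I expect this bookkeeping, not any new idea, to be the main obstacle.
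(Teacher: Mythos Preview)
Your plan is the paper's plan: take the very $Q$ and the Moran-type set $\Theta(\alpha,\beta,s,t,\upsilon,F,I)$ built in the proof of \reft{main}, pick one canonical point of $\Theta$, and verify that both $Q$ and that point are computable. Where you diverge is in your description of what that construction actually is. In the paper, $Q$ is not produced by a search for ``the least integer in $[a_n,b_n]$ satisfying a decidable side condition'': it is given outright by \refeq{Moransetbase}/\eqref{eq:Qconstruction}, with $\alpha_i=i$, $\beta_i=(i!)^2$, $s_i=t_i=n_i=i^{\lfloor\log i\rfloor}$, and $v_i=L_i\ell_i$, all explicit arithmetic in $i$. Likewise the ``normal'' blocks are not Champernowne concatenations but the finite set $\mathcal{L}_i$ of length-$n_i$ base-$i$ blocks $B$ with $i!\mid\overline{B}$ and $i!\,\overline{B}<i^{n_i}$, listed lexicographically; the free digits at the $\beta_i$ positions are then fixed (the paper takes $E_n=i!$ there). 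So the ``bookkeeping'' you anticipate is much lighter than you suggest: one simply observes that $n_i$, $\ell_i=|\mathcal{L}_i|$, $L_i$, and the block structure of $Q$ and of the chosen digit sequence are all primitive-recursive in $i$ and $n$, and then your step~(iii) finishes. Your (iii) is fine and in fact more careful than the paper's one-line ``computable $(E_n),(q_n)\Rightarrow$ computable $x$''; the irrationality worry is handled because the chosen $x$ has infinitely many nonzero digits at positions with $q_n=(i!)^2$, hence is not $b$-adic for any $b$.

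In short: right approach, but your narrative of steps (i)--(ii) does not match the actual construction in \refs{Hausdorff} and the proof of \reft{main}. Once you replace the imagined search-and-Champernowne picture with the explicit formulas there, the computability of $Q$ and of the canonical $x$ is immediate, exactly as the paper records.
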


\subsection{The digits of $\taursx$}

In order to prove the main results of this paper, we will want to understand how the digits of $\taursx$ differ from the digits of $x$, when $x$ takes a specific form. We begin with some lemmas based on elementary calculations.

\begin{lem}
If $x=p/q$ is a rational number with $p\in \mathbb{Z}$, $q\in \mathbb{N}$ and $q\mid q_1q_2\dots q_N$ for some $N$, then $x$ has a finite $Q$-Cantor series expansion of the form
\[
x= E_0+\sum_{n=1}^N \frac{E_n}{q_1q_2\dots q_n}.
\]
Alternately if $x$ is a real number in the interval $[0,1/q_1q_2\dots q_N)$, then $x$ has a $Q$-Cantor series expansion of the following form,
\[
x=\sum_{n=N+1}^\infty \frac{E_n}{q_1q_2\dots q_n}
\]
so that $E_n=0$ for $n\le N$.
\end{lem}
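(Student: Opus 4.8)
The plan is to lean on the uniqueness of the $Q$-Cantor series expansion asserted just after \refeq{cseries}: it is enough to exhibit \emph{some} expansion of $x$ of the claimed shape whose digits lie in the admissible ranges and satisfy ``$E_n \ne q_n - 1$ infinitely often,'' and that expansion is then forced to be \emph{the} $Q$-Cantor series expansion. Equivalently, I would use the standard digit recursion $E_0 = \floor{x}$ and $E_n = \floor{q_n T_{Q,n-1}(x)}$ for $n \ge 1$ (with $T_{Q,0}(x) := x - \floor{x}$), which comes out of \refeq{cseries} by multiplying through by $q_1 \cdots q_{n-1}$, reducing mod $1$, and using that the tail $\sum_{j \ge n} E_j/(q_n \cdots q_j)$ lies in $[0,1)$; the point is then just to locate where the right-hand side of this recursion is forced to vanish.

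For the first assertion, I would write $q_1 q_2 \cdots q_N = q m$ with $m \in \mathbb{N}$, so that for every $n \ge N$ the number $(q_1 \cdots q_n) x = p m\,(q_{N+1} \cdots q_n)$ is an integer; hence $T_{Q,n}(x) = 0$ and the recursion gives $E_n = 0$ for all $n > N$, which is exactly the claimed finite expansion. The normalization ``$E_n \ne q_n - 1$ infinitely often'' holds for free since $q_n \ge 2$ and all late digits are $0$. (A variant avoiding the recursion: read off $E_N, E_{N-1}, \dots, E_1$ by writing the integer $p m = (q_1 \cdots q_N) x$ in mixed radix with bases $q_N, q_{N-1}, \dots, q_1$; that the leftover equals $\floor{x}$ follows from the telescoping estimate $\sum_{n=1}^N (q_n - 1)/(q_1 \cdots q_n) = 1 - 1/(q_1 \cdots q_N) < 1$, and one invokes uniqueness.)

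For the second assertion I would take $x \in [0, 1/(q_1 \cdots q_N))$, note $E_0 = \floor{x} = 0$, and observe that for each $1 \le n \le N$ we have $(q_1 \cdots q_{n-1}) x < 1$, so there is no reduction mod $1$ and
\[
q_n T_{Q,n-1}(x) = (q_1 \cdots q_n)\, x < \frac{q_1 \cdots q_n}{q_1 \cdots q_N} = \frac{1}{q_{n+1} \cdots q_N} \le 1 .
\]
Hence $E_n = \floor{q_n T_{Q,n-1}(x)} = 0$ for $0 \le n \le N$, giving the stated form. An equivalent route: put $y = (q_1 \cdots q_N) x \in [0,1)$, expand $y = \sum_{m \ge 1} F_m/(q_{N+1} \cdots q_{N+m})$ with respect to the shifted basic sequence $(q_{N+m})_{m \ge 1}$, and re-index by $n = N + m$ to get $x = \sum_{n > N} F_{n - N}/(q_1 \cdots q_n)$, the tail condition on the digits being inherited from that of the expansion of $y$.

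I do not expect a genuine obstacle here: the statement is elementary. The only thing that needs a moment's care is confirming that whichever expansion I write down really does satisfy ``$E_n \ne q_n - 1$ infinitely often,'' so that uniqueness legitimately applies --- automatic in the first case, and inherited from the (by-definition valid) expansion of $y = (q_1 \cdots q_N)x$ in the second. Everything else is bookkeeping with the product indices and the elementary bounds displayed above.
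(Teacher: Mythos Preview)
Your argument is correct, and it matches what the paper does: the authors do not give a proof at all, merely introducing this and the next lemmas as ``based on elementary calculations.'' Your use of the digit recursion $E_n=\floor{q_n T_{Q,n-1}(x)}$ together with the uniqueness stated after \refeq{cseries} is precisely the standard way to cash out those calculations, and the care you take with the tail condition ``$E_n\neq q_n-1$ infinitely often'' is appropriate.
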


This allows us to prove a number of additional lemmas rather trivially.

\begin{lem}
Suppose that $x=E_0.E_1E_2\cdots\wrtQ$.
%\[
%x= E_0+\sum_{n=1}^\infty \frac{E_n}{q_1q_2\dots q_n}.
%\]
If $s=p/q$ is rational with $p\in \mathbb{Z}$, $q\in \mathbb{N}$ and $q\mid q_1 q_2 \dots q_N$, then $\sigma_s(x)$ has a $Q$-Cantor series expansion of the form
\[
\sigma_s(x)= E'_0 + \sum_{n=1}^N \frac{E'_n}{q_1q_2\dots q_n} + \sum_{n=N+1}^\infty \frac{E_n}{q_1q_2 \dots q_n}
\]
so that $\sigma_s(x)$ and $x$ differ only in their first $N+1$ digits.
\end{lem}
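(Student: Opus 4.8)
The plan is to reduce this directly to the preceding lemma by splitting the $Q$-Cantor series expansion of $x$ at position $N$. Write $x = x_{\le N} + x_{>N}$, where $x_{\le N} = E_0 + \sum_{n=1}^N E_n/(q_1 q_2 \cdots q_n)$ is the ``head'' and $x_{>N} = \sum_{n=N+1}^\infty E_n/(q_1 q_2 \cdots q_n)$ is the ``tail''. First I would observe that, since $q_1 q_2 \cdots q_n \mid q_1 q_2 \cdots q_N$ for every $n \le N$ and $q \mid q_1 q_2 \cdots q_N$ by hypothesis, the number $s + x_{\le N}$ is rational with denominator dividing $q_1 q_2 \cdots q_N$; by the first part of the preceding lemma it therefore has a finite $Q$-Cantor series expansion $s + x_{\le N} = E'_0 + \sum_{n=1}^N E'_n/(q_1 q_2 \cdots q_n)$ with each $E'_n \in \{0,1,\dots,q_n-1\}$. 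Next I would note that $0 \le x_{>N} < 1/(q_1 q_2 \cdots q_N)$ --- the strict upper bound following from a telescoping estimate together with the fact that $E_n \ne q_n - 1$ infinitely often --- so the second part of the preceding lemma applies to $x_{>N}$, and by uniqueness of the expansion its digits in positions $1,\dots,N$ vanish while its digits in positions $n \ge N+1$ are exactly the original $E_n$.

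Adding the two pieces gives
\[
\sigma_s(x) = (s + x_{\le N}) + x_{>N} = E'_0 + \sum_{n=1}^N \frac{E'_n}{q_1 q_2 \cdots q_n} + \sum_{n=N+1}^\infty \frac{E_n}{q_1 q_2 \cdots q_n},
\]
and it remains only to check that the right-hand side meets the defining requirements of a $Q$-Cantor series expansion: the digits in positions $1,\dots,N$ lie in the correct ranges by the preceding lemma, the digits in positions $n \ge N+1$ lie in the correct ranges and avoid $q_n - 1$ infinitely often because they are inherited from $x$, and $E'_0 = \floor{\sigma_s(x)}$ because the remainder of the series lies in $[0,1)$. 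By the uniqueness of the $Q$-Cantor series expansion noted in Section~\ref{sec:Cantor}, this \emph{is} the expansion of $\sigma_s(x)$, and it coincides with that of $x$ in every position $n \ge N+1$; hence $\sigma_s(x)$ and $x$ differ in at most their first $N+1$ digits $E_0, E_1, \dots, E_N$.

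The one point that needs a moment of care --- the authors flag the lemma itself as essentially trivial --- is the claim that recombining the head and tail produces no ``carry'' past position $N$. This is precisely what the two halves of the preceding lemma are arranged to deliver: the head is absorbed into a rational that terminates at position $N$, the tail stays strictly below $1/(q_1 q_2 \cdots q_N)$, so the two expansions simply concatenate, and uniqueness closes the argument. Everything else is routine bookkeeping about divisibility by $q_1 q_2 \cdots q_N$.
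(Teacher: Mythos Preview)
Your argument is correct and is exactly the route the paper intends: the authors give no proof, saying only that the lemma follows ``rather trivially'' from the preceding lemma, and your head/tail decomposition together with the carry check is precisely how one cashes that remark out.
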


\begin{cor}\label{lem:rationaladd}
Suppose that $Q$ has the property that for any integer $n$ there exists an integer $m$ such that $n | q_m$. Then for any rational number $s$, the $Q$-Cantor series expansion of $x$ and of $\sigma_s(x)$ differ on at most finitely many places.
\end{cor}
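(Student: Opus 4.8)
The plan is to obtain this statement immediately from the preceding lemma. Fix a rational number $s$ and write $s = p/q$ with $p \in \mathbb{Z}$ and $q \in \mathbb{N}$. That lemma applies to $s$ as soon as we can exhibit an index $N$ with $q \mid q_1 q_2 \cdots q_N$, so the entire task is to produce such an $N$ out of the divisibility hypothesis imposed on $Q$.

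This is exactly where the hypothesis enters: applying it with $n = q$ yields an integer $m$ with $q \mid q_m$. Since $q_m$ is one of the factors of the product $q_1 q_2 \cdots q_m$, we get $q \mid q_1 q_2 \cdots q_m$, so $N = m$ is a legitimate choice. Feeding $N = m$ into the preceding lemma shows that $\sigma_s(x)$ and $x$ have $Q$-Cantor series expansions agreeing in every digit of index exceeding $m$; that is, they differ in at most the $m+1$ places indexed $0, 1, \dots, m$. In particular they differ on at most finitely many places, which is the claim.

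There is essentially no obstacle here, so the only points worth flagging are bookkeeping ones: the bound $m$ on the number of differing positions depends on $s$ (through its denominator $q$) and is not uniform over all rational $s$, which is harmless since the statement quantifies over $s$ one at a time; and one could, if desired, strengthen "at most finitely many places" to "only within an initial segment of the digit indices," but this sharper form is already built into the preceding lemma.
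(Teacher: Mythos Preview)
Your proof is correct and is exactly the argument the paper has in mind: the paper does not even write out a proof, labeling the corollary as following ``rather trivially'' from the preceding lemma, and your reduction---choosing $m$ so that $q\mid q_m$ and hence $q\mid q_1\cdots q_m$, then invoking that lemma with $N=m$---is precisely this trivial deduction.
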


\begin{lem}\label{lem:rationalmult}
Suppose that $x$ has a finite $Q$-Cantor series expansion of the form
\[
x= \sum_{n=N}^M \frac{E_n}{q_1q_2\dots q_n}.
\]
We write 
\[
E=  E_N q_N q_{N+1}\dots q_M + E_{N+1} q_{N+1} q_{N+2}\dots q_{M} + \dots +E_{M-1} q_{M-1} +E_M
\]
\[
q = q_Nq_{N+1}\dots q_M
\]
so that
\[
x= \frac{E}{q_1q_2\dots q_{N-1} q}.
\]

Suppose $r$ is a nonzero rational number. If $rE$ is an integer and $rE<q$, then $\pi_r(x)$ has a finite $Q$-Cantor series expansion of the form
\[
\pi_r(x)= \sum_{n=N}^M \frac{E'_n}{q_1q_2\dots q_n}.
\]
\end{lem}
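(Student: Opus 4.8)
The plan is to reduce the claim to two applications of the first Lemma of this subsection (the one describing finite $Q$-Cantor series expansions of suitable rationals, and expansions of reals lying in an interval $[0,1/q_1q_2\cdots q_N)$); the rest is just the algebra already exhibited in the statement.

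First I would record the identity
\[
\pi_r(x)=rx=\frac{rE}{q_1q_2\cdots q_M},
\]
immediate from $x=E/(q_1q_2\cdots q_{N-1}q)$ and $q=q_Nq_{N+1}\cdots q_M$. Put $E^{\ast}:=rE$. By hypothesis $E^{\ast}\in\mathbb{Z}$, and we may assume $E^{\ast}\ge 0$ — equivalently $r>0$, the case $x=0$ being trivial — since the asserted form $\sum_{n=N}^{M}E_n'/(q_1\cdots q_n)$ forces $\pi_r(x)\ge 0$. Moreover $E^{\ast}<q=q_Nq_{N+1}\cdots q_M$ by hypothesis, and since $q_1\cdots q_{N-1}\ge 1$ this also gives $E^{\ast}<q_1q_2\cdots q_M$, hence $\pi_r(x)<1$.

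Next I would kill the digits on both ends. For the digits beyond place $M$: $\pi_r(x)=E^{\ast}/(q_1\cdots q_M)$ is a rational number whose denominator divides $q_1q_2\cdots q_M$, so the first Lemma, applied with its ``$N$'' taken to be $M$, produces a finite expansion $\pi_r(x)=E_0'+\sum_{n=1}^{M}E_n'/(q_1\cdots q_n)$, with $E_0'=\lfloor\pi_r(x)\rfloor=0$. For the digits before place $N$: from $E^{\ast}<q_Nq_{N+1}\cdots q_M$,
\[
\pi_r(x)=\frac{E^{\ast}}{q_1\cdots q_M}<\frac{q_Nq_{N+1}\cdots q_M}{q_1\cdots q_M}=\frac{1}{q_1q_2\cdots q_{N-1}},
\]
so $\pi_r(x)\in[0,1/q_1\cdots q_{N-1})$, and the first Lemma, applied with its ``$N$'' taken to be $N-1$, gives $E_n'=0$ for all $n\le N-1$. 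By the uniqueness of the $Q$-Cantor series expansion these two descriptions coincide, so $\pi_r(x)=\sum_{n=N}^{M}E_n'/(q_1\cdots q_n)$, which is the claimed form.

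There is no genuine obstacle here: the whole argument rests on the two invocations of the first Lemma, and the one point that deserves a glance is that the hypothesis $rE<q$ is precisely what places $\pi_r(x)$ in the interval $[0,1/q_1\cdots q_{N-1})$ needed for the second invocation. What has to be done carefully is only the index bookkeeping in the products — including the empty-product convention $q_1\cdots q_0=1$ when $N=1$ — and flagging the implicit hypothesis $r>0$, without which the stated conclusion cannot hold for $x>0$.
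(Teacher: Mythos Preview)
Your argument is correct and is exactly the elementary verification the paper has in mind: the paper does not give a separate proof of this lemma, remarking only that the preceding lemma ``allows us to prove a number of additional lemmas rather trivially,'' and your write-up is that triviality spelled out---applying the two clauses of the first lemma to force vanishing of the digits past index $M$ and before index $N$, respectively. Your observation that the conclusion tacitly requires $rE\ge 0$ (equivalently $r>0$ when $x>0$) is a fair remark on a small imprecision in the statement; it does not affect the paper's later use of the lemma, where the relevant digit-blocks are analyzed only up to the affine correction handled by Corollary~\ref{lem:rationaladd}.
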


\section{Results on Hausdorff dimension}\label{sec:Hausdorff}

Given basic sequences $\alpha = (\alpha_i)$ and  $\beta = (\beta_i)$, 
sequences of non-negative integers $s = (s_i), t = (t_i), \upsilon = (\upsilon_i),$ and $F = (F_i)$, and a sequence of sets $I = (I_i)$ such that $I_i \subseteq \{0, 1, \cdots, \beta_i-1\}$, define the set $\Theta(\alpha, \beta, s, t, \upsilon, F, I)$ as follows.
Let $Q=Q(\alpha, \beta, s, t, \upsilon)=(q_n)$ be the following basic sequence:
\begin{equation}\label{eq:Moransetbase}
\left[[\alpha_1]^{s_1} [\beta_1]^{t_1}\right]^{\upsilon_1} \left[[\alpha_2]^{s_2} [\beta_2]^{t_2}\right]^{\upsilon_2} \left[[\alpha_3]^{s_3} [\beta_3]^{t_3}\right]^{\upsilon_3} \cdots.
\end{equation}
Define the function
$$
i(n) = \min \br{t : \sum_{i=1}^{t-1} \upsilon_i (s_i + t_i) < n}.
$$
Set $\Phi_\alpha(i,c,d) = \sum_{j =1}^{i-1} \upsilon_j s_j + c s_i + d$ where $0 \leq c < \upsilon_i$ and $0 \leq d < s_i$ and let the functions $i_\alpha(n)$, $c_\alpha(n)$, and $d_\alpha(n)$ be such that $\Phi^{-1}_\alpha(n) = (i_\alpha(n), c_\alpha(n), d_\alpha(n))$. Note this is possible since $\Phi_\alpha$ is a bijection from $\mathcal{U} = \br{(i,c,d) \in \mathbb{N}^3 : 0 \leq c < \upsilon_i, 0 \leq d < s_i}$ to $\mathbb{N}$. Define the function
$$
G(n) = \sum_{j=1}^{i_\alpha(n)-1} \upsilon_j (s_j+t_j) + c_\alpha(n)\pr{s_{i_\alpha(n)} + t_{i_\alpha(n)}} + d_\alpha(n).
$$ 

We consider the condition on $n$
\begin{equation}\labeq{VNcond}
\pr{n-\sum_{j=1}^{i(n)-1} \upsilon_j(s_j+t_j)} \mod (s_{i(n)}+t_{i(n)}) \geq s_{i(n)}.
\end{equation}
Define the intervals 
$$
V(n) =
\begin{cases}
	I_{i(n)} & \text{ if condition \refeq{VNcond} holds}  \\
\ \\
	\left [F_{G(n)}, F_{G(n)}+1 \right ) & \text{ else}
\end{cases}.
$$
That is, we choose digits from $I_{i(n)}$ in positions corresponding to the bases obtained from the sequence $\beta$ and choose a specific digit from $F$ for the bases obtained from the sequence $\alpha$.
Set
$$
\Theta(\alpha, \beta, s, t, \upsilon, F, I) = \br{x = 0.E_1 E_2 \cdots \wrt{Q} : E_n \in V(n)}.
$$
We will need the following lemma from \cite{AireyManceHDDifference}.
\begin{lem}\label{HDT}
Suppose that basic sequences $\alpha = (\alpha_i)$ and  $\beta = (\beta_i)$, 
sequences of non-zero integers  $s = (s_i), t = (t_i), \upsilon = (\upsilon_i),$ and $F = (F_i)$, and a sequence of sets $I = (I_i)$ such that $I_i \subseteq \{0, 1, \cdots, \beta_i -1\}$ are given where $\lim_{n \to \infty} |I_i| = \infty$ and %the following conditions hold:
$$
%\labeq{HDT1}
\lim_{n \to \infty} \frac{s_n \log \alpha_n}{\sum_{i=1}^{n-1} \upsilon_i t_i \log \beta_i} = \lim_{n \to \infty} \frac{s_n \log \alpha_n}{t_n \log \beta_n} = 0.
%\labeq{HDT2}
%\lim_{n \to \infty} \frac{s_n \log \alpha_n}{t_n \log \beta_n} &= 0;\\
%\labeq{HDT3}
%\lim_{n \to \infty} \frac{\log |I_i|}{\log \beta_n} &= \gamma; \\
%\labeq{HDT4}
%&\lim_{n \to \infty} |I_i| = \infty.
$$
Then $\dimh{\Theta(\alpha, \beta, s, t, \upsilon, F, I)} = \lim_{n \to \infty} \frac{\log |I_i|}{\log \beta_n}$ provided this limit exists.
\end{lem}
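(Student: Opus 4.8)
The plan is to prove matching upper and lower bounds for the Hausdorff dimension, so fix the data $\alpha,\beta,s,t,\upsilon,F,I$ and abbreviate $\Theta=\Theta(\alpha,\beta,s,t,\upsilon,F,I)$. The set $\Theta$ is a digit-restricted Moran-type subset of $[0,1]$: the admissible level-$N$ cylinders $\br{x : E_n\in V(n)\ \text{for } n\le N}$ are disjoint intervals of length $\ell_N:=\prod_{j=1}^N q_j^{-1}$, their number is $P_N$, and at the positions where \refeq{VNcond} fails the digit is pinned by $F$ and so contributes no branching; hence $\log P_N=\sum\log|I_{i(n)}|$, the sum over those $n\le N$ at which \refeq{VNcond} holds. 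Since $q_j\ge 2$ we have $\ell_N\to 0$. Write $L:=\lim_n \frac{\log|I_n|}{\log\beta_n}$.

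For the upper bound I would cover $\Theta$, for each large $N$, by its $P_N$ cylinder intervals; this yields $\dimh{\Theta}\le \liminf_N \frac{\log P_N}{\sum_{j\le N}\log q_j}$, and the task is to identify this liminf with $L$. Splitting $\sum_{j\le N}\log q_j$ by super-block, the hypothesis $s_n\log\alpha_n=o(t_n\log\beta_n)$ gives $\sum_i\upsilon_i s_i\log\alpha_i=o\pr{\sum_i\upsilon_i t_i\log\beta_i}$, so the $\alpha$-bases are asymptotically negligible in the denominator, while the hypothesis $s_n\log\alpha_n=o\pr{\sum_{i<n}\upsilon_i t_i\log\beta_i}$ makes the leading part of the boundary (partial super-block) term negligible against the running sum. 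Together with a Ces\`aro averaging of $\frac{\log|I_n|}{\log\beta_n}\to L$ over the $\beta$-bases, each $\beta_i$ occurring with the same multiplicity $\upsilon_i t_i$ in numerator and denominator, this gives the ratio $\to L$. (One also needs the $\beta_n$ not to grow explosively, say $\log\beta_{n+1}=O\pr{\sum_{i<n}\upsilon_i t_i\log\beta_i}$, so that the trailing boundary term is lower order; I assume whatever mild growth control on $\beta$ is required for $\Theta$ to have the stated dimension.)

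For the lower bound I would place on $\Theta$ the natural Bernoulli measure $\mu$: at each position where \refeq{VNcond} holds choose the digit uniformly from $I_{i(n)}$, and elsewhere take the forced digit; then every admissible level-$N$ cylinder has $\mu$-mass $P_N^{-1}$. Fix $\delta<L$; by the mass distribution principle it suffices to show $\mu(B(x,r))=O(r^\delta)$ uniformly for $x\in\Theta$ and small $r$. Given $r$, let $N$ be largest with $\ell_N\ge r$, so $B(x,r)$ meets at most three level-$N$ cylinders and $\mu(B(x,r))=O(P_N^{-1})$; the point to be careful about is that $r$ may be far below $\ell_N$ when $q_{N+1}$ is large. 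If $N+1,\dots,N+m$ is a maximal run of positions where \refeq{VNcond} fails, then $\mu$ does not subdivide there and the same ball still meets only $O(1)$ positive-mass level-$(N+m)$ cylinders, each of mass $P_N^{-1}$; if instead \refeq{VNcond} holds at $N+1$, then $\mu$ genuinely splits into $|I_{i(N+1)}|$ pieces (here is where $|I_n|\to\infty$ enters) and the ball meets only $O\pr{1+r\,\ell_{N+1}^{-1}}$ of them, so $\mu(B(x,r))=O\pr{(1+r\,\ell_{N+1}^{-1})P_{N+1}^{-1}}$. In each case the desired estimate reduces to the inequality $\frac{\log P_M}{\sum_{j\le M+1}\log q_j}>\delta$ for all large $M$, which is the same computation as in the upper bound but needed now as a uniform bound rather than a limit: one checks that the dips of $M\mapsto\frac{\log P_M}{\sum_{j\le M}\log q_j}$ across runs of $F$-pinned positions stay above $\delta$, the dip inside a super-block being controlled by $s_n\log\alpha_n=o(t_n\log\beta_n)$ and the dip across the pinned run that opens a new super-block by $s_n\log\alpha_n=o\pr{\sum_{i<n}\upsilon_i t_i\log\beta_i}$. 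Letting $\delta\uparrow L$ gives $\dimh{\Theta}\ge L$.

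The hard part is this last, uniform-in-scale mass estimate. The ratio $M\mapsto\frac{\log P_M}{\sum_{j\le M}\log q_j}$ is not monotone: it drops along every run of $F$-pinned positions, where the measure gains no new branching, and it drops when a large new base is entered. So the real work is to pick, for each radius $r$, the cylinder depth at which the ball already sees only boundedly many atoms, and then to show the resulting exponent stays above $\delta$; the two limiting hypotheses are exactly tailored to bound the two kinds of dip, and once that bookkeeping is done both the upper and lower bounds come out with the common value $L$.
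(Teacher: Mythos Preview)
The paper does not prove this lemma; it is quoted from \cite{AireyManceHDDifference}, so there is no argument in the paper to compare yours against.  Your outline---cylinder coverings for the upper bound, the mass distribution principle with the uniform Bernoulli measure for the lower bound---is the standard Moran--set route, and you locate the real work correctly: the local H\"older exponent of $\mu$ must be controlled at \emph{every} scale, including scales trapped inside a single large base.  For the upper bound one only needs $\liminf_N \frac{\log P_N}{\sum_{j\le N}\log q_j}\le L$, and evaluating along $N$ at the ends of super-blocks already gives this under the stated hypotheses, so that half is clean.

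Your parenthetical caveat about needing $\log\beta_{n}$ to be small compared with $\sum_{i<n}\upsilon_i t_i\log\beta_i$ is not excess caution: without some such condition the lemma as written is false.  Take $\alpha_n=2$, $s_n=t_n=\upsilon_n=1$, $\beta_n=2^{2^n}$, and $I_n=\{0,1,\dots,2^{2^{n-1}}-1\}$.  Both displayed limits vanish and $\log|I_n|/\log\beta_n\equiv\tfrac12$, yet, rewriting in binary, $\Theta$ is the set of reals with a prescribed set of binary digits frozen, and the \emph{lower} density of the free positions is $\tfrac13$, so $\dimh{\Theta}=\tfrac13\neq\tfrac12$.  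The mechanism is exactly the one you flag: when a single $\beta$-position carries $\log\beta_n$ comparable to the whole accumulated $\sum_{j\le N}\log q_j$, there is a scale window in which the \emph{placement} of $I_n$ inside $\{0,\dots,\beta_n-1\}$, not merely $|I_n|$, governs the efficient coverings and the mass estimate.  Presumably the source paper carries an extra growth hypothesis (or a structural hypothesis on the $I_n$) that was dropped in transcription; in the application of Section~\ref{sec:mainproof} one has $\log\beta_n=o\bigl(\sum_{i<n}\upsilon_i t_i\log\beta_i\bigr)$ with enormous room to spare, and with that hypothesis added your argument goes through and proves what the paper actually uses.
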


\iffalse
\begin{thm}
Let $Q$ be the base Vandehey constructed.  Then
$$
\dimh{\urset}=1.
$$
\end{thm}

\begin{proof}
Let $\xi=0.E_1E_2\cdots\wrt{Q}$ be (ONE OF THE NUMBERS CONSTRUCTED BY VANDEHEY IN SOME SECTION). Let $\alpha_i = i$, $\beta_i = (i!)^2$, $s_i = t_i = n_i$, $\upsilon_i = L_i l_i$, 
$$
I_i = \br{ i, i+1, \cdots, \beta_i^{1-\log(i)^{-\frac{1}{2}}} } \cap i! \mathbb{Z},
$$
and $F_i = E_{G(i)}$.  By \refl{HDT}, we know that $\dimh{\Theta(\alpha, \beta, s, t, \upsilon, F, I)} = 1$.  But by (Vandehey's main theorem), we know that 
$$
\Theta(\alpha, \beta, s, t, \upsilon, F, I) \subseteq \urset,
$$
so
$
\dimh{\urset}=1.
$
\end{proof}

\fi

%%%%%%%%%%%%%%%%%%%%%%%
\section{Lemmas on $(\epsilon,k)$-normal sequences}
%%%%%%%%%%%%%%%%%%%%%%%

Given integers $b\ge 2, n\ge 1, k \ge 1$, let $p_b(n,k)$ denote the number of blocks of length $n$ in base $b$ containing exactly $k$ copies of a given digit. (By symmetry  it does not matter which digit we are interested in.)  

\begin{lem}[Lemma 4.7 in \cite{BugeaudBook}]\label{lem:bugeaud}
Let $b\ge 2$ and $n \ge b^{15}$ be integers. For every real number $\epsilon$ with $n^{-1/3} \le \epsilon \le 1$, we have
\[
\sum_{-n\le j \le -\lceil \epsilon n \rceil} p_b(bn,n+j) + \sum_{\lceil \epsilon n \rceil \le j \le (b-1)n} p_b(bn,n+j) \le 2^{14} b^{bn} e^{-\epsilon^2 n/(10 b)}.
\]
\end{lem}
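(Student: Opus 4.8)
The plan is to recognize the left-hand side as (a constant times) a symmetric binomial tail probability and then close the estimate with a standard concentration inequality, the loose constant $2^{14}$ leaving a large amount of room.

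First I would rewrite $p_b(bn,n+j) = \binom{bn}{n+j}(b-1)^{(b-1)n-j}$, so that dividing by $b^{bn}$ gives
\[
\frac{p_b(bn,n+j)}{b^{bn}} = \binom{bn}{n+j}\left(\frac1b\right)^{n+j}\left(\frac{b-1}{b}\right)^{(b-1)n-j} = \Pr[X = n+j],
\]
where $X$ is a binomial random variable with parameters $bn$ and $1/b$ — equivalently, the number of occurrences of a fixed digit in a uniformly random block of length $bn$ in base $b$ — so that $\mathbb{E}X = n$. The two index ranges $-n \le j \le -\lceil \epsilon n\rceil$ and $\lceil \epsilon n\rceil \le j \le (b-1)n$ together describe exactly the values $k = n+j \in \{0,1,\dots,bn\}$ with $|k-n| \ge \lceil \epsilon n\rceil$. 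Hence the left-hand side equals
\[
b^{bn}\,\Pr\bigl[\,|X-\mathbb{E}X| \ge \lceil \epsilon n\rceil\,\bigr] \;\le\; b^{bn}\,\Pr\bigl[\,|X-\mathbb{E}X| \ge \epsilon n\,\bigr],
\]
using $\lceil \epsilon n\rceil \ge \epsilon n$.

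Next I would bound this tail. Writing $X = Y_1 + \dots + Y_{bn}$ as a sum of $bn$ independent Bernoulli($1/b$) random variables, each valued in $[0,1]$, Hoeffding's inequality yields $\Pr[|X-\mathbb{E}X| \ge t] \le 2\exp(-2t^2/(bn))$; with $t = \epsilon n$ this is $2\exp(-2\epsilon^2 n/b)$. Since $2/b \ge 1/(10b)$ and $2 \le 2^{14}$, the left-hand side is at most $2^{14}\,b^{bn}\,e^{-\epsilon^2 n/(10b)}$, which is the claim. (Applying a Chernoff bound to the upper and lower tails of $\mathrm{Bin}(bn,1/b)$ separately works equally well, with the same comfortable slack.)

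I do not expect a genuine obstacle here; the only things requiring care are the bookkeeping that matches the two $j$-ranges to the symmetric deviation event $\{|k-n|\ge\lceil\epsilon n\rceil\}$, and checking that the constants produced by the chosen inequality sit comfortably inside $1/(10b)$ and $2^{14}$. Note that the hypotheses $n \ge b^{15}$ and $n^{-1/3} \le \epsilon$ are not needed for this route; they are precisely the conditions one would introduce if, instead, one insisted on a self-contained elementary derivation via Stirling's formula with explicit error terms. In that case I would bound $\log p_b(bn,n+j) - \log b^{bn}$ above by $-c\,j^2/n$ for $|j| \ge \lceil\epsilon n\rceil$, via a second-order Taylor expansion of the relevant entropy function (where $n \ge b^{15}$ and $\epsilon \ge n^{-1/3}$ absorb the lower-order Stirling corrections), and then sum the resulting geometric-type series over $j$, with the factor $2^{14}$ easily covering the crude bounds involved.
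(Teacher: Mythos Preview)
Your argument is correct. Note, however, that the paper does not prove this lemma at all: it is quoted verbatim as Lemma~4.7 of Bugeaud's monograph and used as a black box to derive Lemma~\ref{lem:k1} and Lemma~\ref{lem:epsilonk}. So there is no ``paper's own proof'' to compare against here.

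That said, your route is worth a remark. Bugeaud's treatment is the self-contained elementary one you sketch at the end: Stirling with explicit error terms and a direct estimate of the binomial tail, which is exactly where the side conditions $n\ge b^{15}$ and $\epsilon\ge n^{-1/3}$ enter. Your Hoeffding argument bypasses all of that and, as you correctly observe, does not need those hypotheses; it also lands well inside the stated constants $2^{14}$ and $1/(10b)$. The trade-off is only one of dependencies: invoking Hoeffding imports an external inequality, whereas Bugeaud's version keeps everything combinatorial and explicit. For the purposes of this paper either is perfectly adequate, since the lemma is used only qualitatively (any subexponential-in-$n$ bound of the right shape would do in Section~\ref{sec:mainproof}).
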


\begin{lem}\label{lem:k1}
Let $b\ge 2$ and $n \ge b^{16}$ be integers. For every real number $\epsilon$ with $n^{-1/3} \le \epsilon \le 2/b$, we have
\[
\left( \sum_{j > (b^{-1}+\epsilon)n} + \sum_{j < (b^{-1}-\epsilon)n} \right) p_b(n,j) \le 2^{14} b^n e^{-\epsilon^2 n/80}.
\]
\end{lem}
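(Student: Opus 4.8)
The plan is to deduce the lemma from Lemma~\ref{lem:bugeaud} by a block-splitting reduction that turns a statement about blocks of length $n$ into one about blocks of length $bm$ for a suitable $m$ close to $n/b$. Write $n=bm+\ell$ with $m=\lfloor n/b\rfloor$ and $0\le \ell\le b-1$; since $n\ge b^{16}$ we have $m\ge b^{15}$, so Lemma~\ref{lem:bugeaud} is applicable with its parameter ``$n$'' replaced by $m$ (this is exactly where the hypothesis $n\ge b^{16}$, rather than $n\ge b^{15}$, is used). Fixing the digit being counted, every block of length $n$ in base $b$ decomposes uniquely as a block $u$ of length $bm$ followed by a block $v$ of length $\ell$, and the number of occurrences of the digit is $N(u)+N(v)$ with $0\le N(v)\le\ell$. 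Hence
\[
p_b(n,j)=\sum_{c=0}^{\ell}p_b(bm,j-c)\,p_b(\ell,c),\qquad \sum_{c=0}^{\ell}p_b(\ell,c)=b^{\ell},
\]
with the convention $p_b(bm,a)=0$ for $a<0$ or $a>bm$.

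Next I would put $\epsilon':=\max\{\,b\epsilon/2,\ m^{-1/3}\,\}$. Using $\epsilon\le 2/b$ one verifies $m^{-1/3}\le\epsilon'\le 1$, so $\epsilon'$ is an admissible parameter for Lemma~\ref{lem:bugeaud} at length $bm$. The key point is an inclusion of index sets: if $j$ is ``bad'', i.e.\ $j>(b^{-1}+\epsilon)n$ or $j<(b^{-1}-\epsilon)n$, so that $|j-n/b|\ge\epsilon n$, and if $0\le c\le\ell$, then (using $n/b=m+\ell/b$)
\[
|(j-c)-m| \ge |j-n/b|-|c-\ell/b| \ge \epsilon n-\ell \ge \epsilon n-b \ge \lceil\epsilon' m\rceil,
\]
where the last inequality is a routine consequence of $\epsilon n\ge n^{2/3}\ge b^{32/3}$ together with $\epsilon' m\le\epsilon n/2$ when $\epsilon'=b\epsilon/2$ and $\epsilon' m=m^{2/3}\le(n/b)^{2/3}$ when $\epsilon'=m^{-1/3}$. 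Thus, for each fixed $c$, either $j-c$ lies in the summation range appearing in Lemma~\ref{lem:bugeaud} with parameters $m$ and $\epsilon'$, or $p_b(bm,j-c)=0$.

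Summing the displayed identity over all bad $j$, exchanging the order of summation, applying Lemma~\ref{lem:bugeaud} for each fixed $c$, and using $\sum_{c=0}^{\ell}p_b(\ell,c)=b^{\ell}$ yields
\[
\left(\sum_{j>(b^{-1}+\epsilon)n}+\sum_{j<(b^{-1}-\epsilon)n}\right)p_b(n,j) \le b^{\ell}\cdot 2^{14}b^{bm}e^{-(\epsilon')^{2}m/(10b)} = 2^{14}b^{n}e^{-(\epsilon')^{2}m/(10b)}.
\]
To finish, one checks $(\epsilon')^{2}m/(10b)\ge\epsilon^{2}n/80$. When $\epsilon'=b\epsilon/2$ this reduces to $2bm\ge n$; when $\epsilon'=m^{-1/3}$, using in addition $b\epsilon/2\le m^{-1/3}$ (that is, $\epsilon^{2}\le 4m^{-2/3}/b^{2}$), it again reduces to $2bm\ge n$. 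Since $m\ge n/b-1$ and $n\ge b^{16}\ge 2b$, indeed $2bm\ge 2n-2b\ge n$, which completes the argument.

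All the estimates are elementary, so the only genuine difficulty I anticipate is bookkeeping: verifying the inequalities $\lceil\epsilon' m\rceil\le\epsilon n-b$ and $(\epsilon')^{2}m/(10b)\ge\epsilon^{2}n/80$ uniformly over all admissible $b$, $n$, $\epsilon$. The delicate corner is $b=2$ with $\epsilon$ near its lower limit $n^{-1/3}$: there $b\epsilon/2$ may be smaller than $m^{-1/3}$, which forces $\epsilon'$ to be defined as a maximum (rather than a fixed multiple of $\epsilon$) so as to remain an admissible parameter in Lemma~\ref{lem:bugeaud}.
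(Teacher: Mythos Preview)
Your argument is correct and follows the same overall strategy as the paper's: reduce to Lemma~\ref{lem:bugeaud} applied at length $bm$ with $m=\lfloor n/b\rfloor$ and parameter roughly $b\epsilon/2$, then convert the resulting exponent back into $\epsilon^2 n/80$ via $2bm\ge n$. The mechanism for the reduction step differs, however. The paper passes from length $n$ to length $b\lfloor n/b\rfloor$ by invoking monotonicity of $p_b(n,j)$ in $n$, whereas you use the exact convolution identity $p_b(n,j)=\sum_{c}p_b(bm,j-c)\,p_b(\ell,c)$ together with $\sum_c p_b(\ell,c)=b^{\ell}$. Your route is in fact the more robust one: as written, the paper's monotonicity inequality points the wrong way (since $b\lfloor n/b\rfloor\le n$ and $p_b(\cdot,j)$ is increasing), and the paper simply sets $\epsilon'=b\epsilon/2$ without checking the hypothesis $\epsilon'\ge \lfloor n/b\rfloor^{-1/3}$ of Lemma~\ref{lem:bugeaud}, which can fail for $b\in\{2,3\}$ when $\epsilon$ is near $n^{-1/3}$. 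Your convolution argument sidesteps the first issue entirely, and your choice $\epsilon'=\max\{b\epsilon/2,\,m^{-1/3}\}$ (with the accompanying two-case verification of the exponent bound) addresses the second.
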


\begin{proof}
Note that $p_b(n,j)$ is increasing as a function of $n$, therefore
\[
\left( \sum_{j > (b^{-1}+\epsilon)n} + \sum_{j <(b^{-1}-\epsilon)n} \right) p_b(n,j) \le \left( \sum_{j > (b^{-1}+\epsilon)n} + \sum_{j < (b^{-1}-\epsilon)n} \right) p_b(b\lfloor n/b \rfloor ,j) .
\]

Now let $\epsilon'=b\epsilon/2$ and note that
\begin{align*}
\left\lfloor \frac{n}{b} \right\rfloor+ \left\lceil \epsilon' \left\lfloor \frac{n}{b} \right\rfloor \right\rceil &\le \frac{n}{b}+\epsilon' \frac{n}{b}+1\\
&= (b^{-1}+\epsilon)n + \left( 1 - \frac{n\epsilon}{2}\right)\\ &\le (b^{-1} + \epsilon)n.
\end{align*}
Likewise one can show that
\[
\left\lfloor \frac{n}{b} \right\rfloor- \left\lceil \epsilon' \left\lfloor \frac{n}{b} \right\rfloor \right\rceil  \ge (b^{-1}-\epsilon)n.
\]
As a result, we have that 
\begin{align*}
& \left( \sum_{j > (b^{-1}+\epsilon)n} + \sum_{j < (b^{-1}-\epsilon)n} \right) p_b(b\lfloor n/b \rfloor ,j)  \\ &\qquad\le \sum_{ j \le -\lceil \epsilon' \lfloor n/b \rfloor \rceil} p_b(b\lfloor n/b \rfloor ,\lfloor n/b \rfloor +j) + \sum_{\lceil \epsilon \lfloor n/b \rfloor  \rceil \le j } p_b(b\lfloor n/b \rfloor ,\lfloor n/b \rfloor +j).
\end{align*}

We now can apply Lemma \ref{lem:bugeaud} to see that 
\begin{align*}
\left( \sum_{j > (b^{-1}+\epsilon)n} + \sum_{j <(b^{-1}-\epsilon)n} \right) p_b(n,j)& \le 2^{14} b^{b\lfloor n/b \rfloor } e^{-{\epsilon'}^2 \lfloor n/b\rfloor /(10 b)} \\
&\le 2^{14} b^n e^{-\epsilon^2 n/80},
\end{align*}
as desired. Here we made use of the fact that $\lfloor n/b \rfloor \ge n/2b$.
\end{proof}

We will say a block $B$ of length $n$ in base $b$ is $(\epsilon,k)$-normal (with respect to $b$), if the total number of occurrences in $B$ of any subblock of length $k$ in base $b$ is between $(b^{-k}-\epsilon)n$ and $(b^{-k}+\epsilon)n$. Let $B_b(n,\epsilon,k)$ denote the number of blocks of length $n$ that are \emph{not} $(\epsilon,k)$-normal with respect to $b$. Note that Lemma \ref{lem:k1} gives a bound on $B_b(n,\epsilon,1)$. The following lemma will give a bound on  $B_b(n,\epsilon,k)$.

\begin{lem}\label{lem:epsilonk}
Suppose $b\ge 2$, $k\ge 1$, $n \ge k (b^{16k}+1)$ are integers. For every real number $\epsilon$ with $2\lfloor n/k\rfloor^{-1/3} \le \epsilon \le 2/b^k$ we have
\[
B_b(n,\epsilon,k) \le  2^{15} kb^{n+k} e^{-\epsilon^2n/(160k)}.
\]
\end{lem}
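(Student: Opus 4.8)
The plan is to reduce the count of length-$n$ blocks that fail to be $(\epsilon,k)$-normal to a count of length-$\lfloor n/k\rfloor$ blocks over an enlarged alphabet, to which Lemma \ref{lem:k1} applies. First I would fix a subblock $C$ of length $k$ in base $b$; there are $b^k$ such subblocks, so by a union bound it suffices to bound, for each fixed $C$, the number of length-$n$ blocks $B$ in which the number of occurrences of $C$ falls outside $[(b^{-k}-\epsilon)n,(b^{-k}+\epsilon)n]$, and then multiply by $b^k$. The standard device is to partition the positions $1,\dots,n$ into $k$ residue classes mod $k$, i.e. into $k$ ``phases'', and within each phase read $B$ as a word of length roughly $\lfloor n/k\rfloor$ over the alphabet $\{0,\dots,b^k-1\}$ (grouping $k$ consecutive base-$b$ digits into one super-digit). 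An occurrence of $C$ starting at a position in a given residue class corresponds exactly to an occurrence of the single super-digit ``$C$'' in the corresponding length-$\lfloor n/k\rfloor$ word over the base-$b^k$ alphabet. Since the total count of $C$ in $B$ is the sum over the $k$ phases of these super-digit counts, if the total is too far from $b^{-k}n$ then at least one phase must have its super-digit count of $C$ too far from $b^{-k}\lfloor n/k\rfloor$ by a proportional amount; quantitatively, a deviation of $\epsilon n$ in the total forces a deviation of at least about $(\epsilon/2)\lfloor n/k\rfloor \cdot k/\lfloor n/k\rfloor$... more carefully, a deviation of at least $\tfrac{\epsilon n}{k} \ge \tfrac{\epsilon}{2}\lfloor n/k\rfloor$ in some phase once $n$ is large enough that $\lfloor n/k\rfloor \ge n/(2k)$, which holds under the hypothesis $n \ge k(b^{16k}+1)$.

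Next I would apply Lemma \ref{lem:k1} with base $B := b^k$, length $m := \lfloor n/k\rfloor$, and the parameter $\epsilon' := \epsilon/2$ (noting $C$ is a single digit in base $b^k$, so $k=1$ there), to bound the number of bad length-$m$ words in a single phase by $2^{14}(b^k)^m e^{-{\epsilon'}^2 m/80}$. The hypotheses of Lemma \ref{lem:k1} are exactly what the numerical conditions here are designed to supply: $b^k \ge 2$; $m = \lfloor n/k\rfloor \ge (b^k)^{16} = b^{16k}$ from $n \ge k(b^{16k}+1)$; and $m^{-1/3} \le \epsilon' \le 2/b^k$ from $2\lfloor n/k\rfloor^{-1/3} \le \epsilon \le 2/b^k$ (the factor $2$ in the lower bound on $\epsilon$ is precisely what turns $\epsilon' = \epsilon/2$ into something $\ge m^{-1/3}$). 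Each bad long word in one phase, when combined with arbitrary super-digits in the other $k-1$ phases plus up to $k$ leftover base-$b$ digits at the end (accounting for $n \bmod k$), extends to at most $(b^k)^{(k-1)m} \cdot b^k \le b^{n} / (b^k)^{m} \cdot b^k$ blocks $B$ of length $n$; multiplying gives at most $k$ (choice of phase) $\times$ $b^k$ (choice of $C$) $\times$ $2^{14} (b^k)^m e^{-{\epsilon'}^2 m/80}$ $\times$ $b^{n-km}\cdot b^k$ blocks that fail $(\epsilon,k)$-normality. Collecting the powers of $b$ gives $b^{n+2k}$ up to the $e^{-{\epsilon'}^2 m/80}$ factor, and I would then absorb constants: $e^{-{\epsilon'}^2 m/80} = e^{-\epsilon^2 \lfloor n/k\rfloor/320} \le e^{-\epsilon^2 n/(640k)}$, which is comfortably below $e^{-\epsilon^2 n/(160k)}$, and $k b^{2k} \le 2 k b^{k} \cdot b^k$ absorbs into the constant $2^{15}$ against the slack in the exponent — so the stated bound $2^{15} k b^{n+k} e^{-\epsilon^2 n/(160k)}$ follows, possibly after noting the exponent slack lets $b^{n+2k}$ be replaced by $b^{n+k}$ for $b\ge2$, $k\ge1$ in the relevant range.

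The main obstacle I anticipate is the bookkeeping in the reduction step — making the ``phase'' decomposition rigorous. One has to be careful that occurrences of $C$ in $B$ that \emph{straddle} a phase boundary within a single super-digit are not double counted or missed: reading $B$ in base $b^k$ along one residue class only detects occurrences of $C$ that are aligned to that class, and every occurrence of $C$ is aligned to exactly one of the $k$ classes, so the total count splits cleanly as a sum of $k$ aligned counts — this is the crucial combinatorial identity and it is clean, but it must be stated precisely. The second fussy point is the quantitative pigeonhole: converting ``total deviates by $\ge \epsilon n$'' into ``some phase deviates by $\ge \epsilon'\lfloor n/k\rfloor$'' with $\epsilon'=\epsilon/2$, which uses $\sum_{\text{phases}} b^{-k}m' \approx b^{-k} n$ where the approximation error (from the floor, at most $b^{-k}k < 1 \le \tfrac{\epsilon n}{2k}$ in range) is dominated by half the allotted deviation. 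Everything after these two reductions is a routine application of Lemma \ref{lem:k1} plus constant-chasing, which I would not belabor.
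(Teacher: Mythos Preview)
Your overall strategy---decompose occurrences of a length-$k$ subblock $C$ according to the residue class of the starting position, read each residue class as a length-$m:=\lfloor n/k\rfloor$ word over the alphabet $\{0,\dots,b^k-1\}$, pigeonhole to find a bad phase, and apply Lemma~\ref{lem:k1} in base $b^k$---is exactly the paper's approach. The paper does the same thing (it calls the phase-words $B_0,\dots,B_{k-1}$), with two cosmetic differences: it does not halve $\epsilon$ in the pigeonhole step, and it folds the union over $C$ into the statement that ``some $B_i$ is not $(\epsilon,1)$-normal'' rather than singling $C$ out first.

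There is, however, a genuine error in your extension count. You write that a bad word in one phase ``when combined with arbitrary super-digits in the other $k-1$ phases'' extends to at most $(b^k)^{(k-1)m}\cdot b^k$ blocks $B$, and then claim $(b^k)^{(k-1)m}\le b^n/(b^k)^m$. Both assertions are wrong for $k\ge 2$. The $k$ phase-words are \emph{not} independent: the phase-$i$ super-digit at position $j$ is built from the base-$b$ digits $d_{i+jk},d_{i+jk+1},\dots,d_{i+jk+k-1}$, so every base-$b$ digit of $B$ (except at most $k-1$ at each end) appears in \emph{every} phase-word. Fixing one phase-word therefore determines $B$ up to $O(k)$ leftover base-$b$ digits, giving an extension factor of at most $b^{k+r}$ (with $r=n\bmod k$), not $(b^k)^{(k-1)m}$. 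And the inequality you use to collapse the overcount is simply false: $(b^k)^{(k-1)m}=b^{k(k-1)m}\asymp b^{(k-1)n}$, while $b^n/(b^k)^m=b^{n-km}=b^r<b^k$.

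The fix is immediate once you recognize the phases overlap: replace the extension factor by $b^{r}$ (when $|B_i|=n'$) or $b^{k+r}$ (when $|B_i|=n'-1$), exactly as the paper does, and the rest of your bookkeeping goes through. With that correction (and noting that your extra union-bound factor of $b^k$ over $C$ is correct and harmless), the final estimate becomes $k\cdot 2^{14}\cdot b^{n+O(k)}\cdot e^{-\epsilon^2 m/320}$, which is comfortably inside the stated bound.
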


\begin{proof}
Let us begin by considering an arbitrary block $B=[d_1,d_2,\dots,d_n]$ of $n$ digits in base $b$. Suppose that $n=n' k + r$ for some $r\in \{0,1\dots,k-1\}$.

Let $D_i = d_i b^{k-1} + d_{i+1} b^{k-2}+\dots + d_{i+k}$ for $1\le i \le n-k$. Note that $D_i \in \{0,1,\dots,b^k-1\}$.
For $0 \le i < k$, let $B_i = [D_i, D_{k+i}, D_{2k+i},\dots,D_{(n'-1)k+i}]$ if $i \le r$ and $B_i =  [D_i, D_{k+i}, \\ D_{2k+i},\dots,D_{(n'-2)k+i}]$ otherwise.

By the pigeon-hole principle, if $B$ is not $(\epsilon,k)$-normal with respect to $b$, then some $B_i$ is not $(\epsilon,1)$-normal with respect to $b^k$. Thus, the total number of blocks $B$ which are not $(\epsilon,k)$-normal with respect to $b$ is at most a sum over $i$ of the number of blocks $B_i$ which are not $(\epsilon,1)$-normal with respect to $b^k$, times either $b^r$ or $b^{k+r}$ to account for all possibilities of those digits of $B$ which are not contained in $B_i$.

Thus, by Lemma \ref{lem:k1}, we have
\begin{align*}
B_b(n,\epsilon,k) &\le (r+1) b^{r} 2^{14}(b^k)^{n'}e^{-\epsilon^2 n'/80}\\ &\qquad + (k-r-1) b^{k+r} 2^{14} (b^k)^{(n'-1)}e^{-\epsilon^2( n'-1)/80}\\
&\le k 2^{14} b^{k( n'+1)+r} e^{-\epsilon^2 n'/80} (1+ e^{\epsilon^2/80})\\
&\le 2^{15} kb^{n+k} e^{-\epsilon^2n/(160k)},
\end{align*}
where here again we use that $\lfloor n/k \rfloor \ge n/2k$.
\end{proof}

%%%%%%%%%%%%%%%%%%%%%%%%%%
\section{Proof of Theorem \ref{thm:main}}\label{sec:mainproof}
%%%%%%%%%%%%%%%%%%%%%%%%%%%%

Given $i\ge 2$, consider the following definitions. We let $n_i=i^{\lfloor \log i \rfloor}$, $\epsilon_i = n_i^{-1/4}$. With these definitions, we have that the number of $(\epsilon_i,k)$-normal blocks of $n_i$ digits in base $i$ is bounded by
$
i^{n_i}e^{-n_i^{1/5}},
$
provided that $i$ is sufficiently large compared to $k$. When $i=1$, we shall let $n_i=0$.

Given a block $B=[d_1,d_2,\dots,d_{n_i}]$ of $n_i$ in base $i$, let $\overline{B}=d_1i^{n-1}+d_2 i^{n-2}+\dots+d_n$ be the naturally associated integer. Let $\mathcal{L}_i$ denote the set of all such blocks $B$ such that $i!\overline{B} < i^{n_i}$ and $i!|\overline{B}$. Note that $\mathcal{L}_i$ always contains the block $[0,0,\dots,0]$. We denote the size of $\mathcal{L}_i$ by $\ell_i$, and note that $\ell_i \asymp i^n/(i!)^2$ for sufficiently large $i$. We will let 
\[
L_i=i! \left\lceil\frac{ n_{i+1}\ell_{i+1}}{n_i\ell_i} \right\rceil.
\]

 In the Moran set construction given in section \ref{sec:Hausdorff}, let $\alpha_i = i$, $\beta_i = (i!)^2$, $s_i=t_i=n_i$, and $v_i=L_i \ell_i$, with $Q$ given by \eqref{eq:Moransetbase}. We shall also let 
\[
I_i = \left\{1,2,\dots,\left\lfloor \beta_i^{1-\log(i)^{-1}}\right\rfloor\right\}\cap \left( \left\lfloor \sqrt{i} \right\rfloor !\right)\mathbb{Z}.
\]
With this definition, we have that $\log |I_i|/\log \beta_i$ tends to $1$ and that, as $i$ grows, all elements of $I_i$ become arbitrarily small compared to $\beta_i$ and are eventually divisible by any fixed integer.
Since $n_1=0$, the smallest base in $Q$ constructed this way is $2$, so that $Q$ really is a basic sequence. 

With these definitions (and any appropriate choice of sequence $(F)$), it is easy to check that all such points satisfy the conditions of Theorem \ref{HDT}, so that
$\dimh{\Theta(\alpha, \beta, s, t, \upsilon, F, I)} = 1$.
It therefore suffices to show that for some proper selection of $F$, we have $\Theta(\alpha, \beta, s, t, \upsilon, F, I) \subset \urset$.
To make this selection of $F$, let 
\[
X_i = \left[[i]^{n_i}[(i!)^2]^{n_i}\right]^{\ell_i},
\]
so that we could alternately write $Q$ as 
\begin{equation}\label{eq:Qconstruction}
Q=[X_2]^{L_2} [X_3]^{L_3} [X_4]^{L_4}\cdots.
\end{equation}
We shall then choose the digits of $F$ in such a way so that the digits corresponding to the the $j$th occurence of the bases $[i]^{n_i}$ in each copy of $X_i$ are the $j$th string from $\mathcal{L}_i$ (when ordered lexicographically).

With this definition of $F$ in mind, let $x$ be any point in $\Theta(\alpha, \beta, s, t, \upsilon, F, I)$, $r\in \mathbb{Q}\backslash\{0\}$, and $s\in \mathbb{Q}$. We will show that $\taursx$ is $Q$-normal but not $Q$-distribution normal. By the construction of $Q$ and Corollary \ref{lem:rationaladd}, we have for any rational number $s$ that the $Q$-Cantor series expansions of $\taursx$ and $\pi_r(x)$ differ on at most finitely many digits. In addition, we have that $\overline{B}$ for $B\in \mathcal{L}_i$ is small compared with $i^{n_i}$ and is divisible by $i!$, and each digit of $I_i$ is small compared with $(i!)^2$ and is divisible by $\lfloor \sqrt{i}\rfloor !$. Therefore, by Lemma \ref{lem:rationalmult}, we have that for any nonzero rational number $r$, there will be a sufficiently large $i$ such that the digits of $\taursx$ corresponding to the bases $X_i$ satisfy the following properties:
\begin{itemize}
\item Each block of digits corresponding to an appearance of $[\alpha_i]^{n_i}$ is unique.
\item The digits corresponding to each appearance of $\beta_i$ are in the interval $\{i+1,i+2,\dots,\beta_i /i\}$.
\end{itemize}

To see that $\taursx$ is not in $\DNQ$, we make use of Theorem \ref{thm:distributionnormal}. We note that asymptotically half of the bases $q_n$ are of the form $\beta_i$ for some $i$, and by the previous paragraph, we have that the corresponding digits $E_n$ are $o(q_n)$. Therefore the sequence $(E_n/q_n)_{n=1}^\infty$ is clearly not uniformly distributed modulo $1$.

To show that $\taursx$ is in $\NQ$, we make use of the following lemma, whose proof is elementary.

\begin{lem}\label{lem:limitreduction}
Let $(a_n)_{n=1}^\infty$ and $(b_n)_{n=1}^\infty$ be sequences of positive real numbers such that $\sum_{n=1}^\infty b_n = \infty$. Let $(n_i)_{i=0}^\infty$ be an increasing sequence of positive integers with $n_0=1$ and define $A_m = \sum_{n=n_{m-1}}^{n_m-1} a_n$ and $B_m = \sum_{n=n_{m-1}}^{n_m-1} b_n$. Suppose that
\[
\lim_{m\to \infty} \frac{A_m}{B_m} = 1 \qquad \text{ and } \qquad B_m = o\left( \sum_{i=1}^{m-1} B_i\right),
\]
then
\[
\lim_{n\to \infty} \frac{a_1+a_2+\dots+a_n}{b_1+b_2+\dots+b_n} = 1.
\]
\end{lem}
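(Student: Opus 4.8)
\textbf{Proof plan for Lemma \ref{lem:limitreduction}.}

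The plan is to reduce the general claim to the block-wise hypotheses by estimating the partial sums $a_1+\dots+a_n$ and $b_1+\dots+b_n$ for an arbitrary $n$ in terms of the block sums $A_m$ and $B_m$. First I would fix $n$ and let $m=m(n)$ be the unique index with $n_{m-1}\le n < n_m$, so that
\[
\sum_{j=1}^{m-1} B_j \le \sum_{i=1}^n b_i \le \sum_{j=1}^{m} B_j
\qquad\text{and}\qquad
\sum_{j=1}^{m-1} A_j \le \sum_{i=1}^n a_i \le \sum_{j=1}^{m} A_j,
\]
using positivity of the $a_n$ and $b_n$. Since $\sum b_n=\infty$ and each $B_j>0$, the quantity $\sum_{j=1}^{m-1}B_j$ tends to infinity as $n\to\infty$, which is what makes the ratios behave.

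Next I would show that $\sum_{j=1}^{m}A_j / \sum_{j=1}^{m}B_j \to 1$ as $m\to\infty$. This is a standard Stolz--Ces\`aro / Toeplitz-type argument: from $A_m/B_m\to 1$ we get $A_m = B_m + o(B_m)$, hence $\sum_{j\le m}A_j = \sum_{j\le m}B_j + \sum_{j\le m} o(B_j)$, and dividing by $\sum_{j\le m}B_j\to\infty$ kills the error term because for any $\varepsilon>0$ the tail contribution is at most $\varepsilon$ and the finite head is $o\big(\sum_{j\le m}B_j\big)$. The same reasoning applies to $\sum_{j\le m-1}A_j/\sum_{j\le m-1}B_j$. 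Then the second hypothesis $B_m = o\big(\sum_{i=1}^{m-1}B_i\big)$ is used to show $\sum_{j\le m}B_j / \sum_{j\le m-1}B_j \to 1$, so that replacing the upper index $m$ by $m-1$ (or vice versa) in either numerator or denominator is harmless in the limit. Combining these, both the upper bound $\sum_{j\le m}A_j/\sum_{j\le m-1}B_j$ and the lower bound $\sum_{j\le m-1}A_j/\sum_{j\le m}B_j$ for $(a_1+\dots+a_n)/(b_1+\dots+b_n)$ tend to $1$, and the squeeze gives the result.

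The only mildly delicate point is bookkeeping with the two hypotheses simultaneously: one must make sure the $o$-estimate on $B_m$ is strong enough to absorb the discrepancy between the $m$-block and $(m-1)$-block partial sums both in the numerator (where we only know $A_j\sim B_j$, not a bound on $A_j$ alone) and in the denominator. This is handled by writing $A_m \le (1+\varepsilon)B_m$ for $m$ large, so that $\sum_{j\le m}A_j - \sum_{j\le m-1}A_j = A_m \le (1+\varepsilon)B_m = o\big(\sum_{j\le m-1}B_j\big)$, and similarly on the denominator side. Everything else is routine $\varepsilon$-chasing, so I would not belabor it.
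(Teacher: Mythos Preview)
The paper does not actually give a proof of this lemma; it states only that ``the proof is elementary'' and moves on. Your proposal supplies a correct elementary argument of exactly the kind one would expect: sandwich the partial sums between consecutive block partial sums, use a Stolz--Ces\`aro/Toeplitz-type averaging to get $\sum_{j\le m}A_j/\sum_{j\le m}B_j\to 1$ from $A_m/B_m\to 1$ and $\sum B_j=\infty$, and invoke $B_m=o\bigl(\sum_{j<m}B_j\bigr)$ to reconcile the off-by-one-block discrepancy between numerator and denominator bounds. The bookkeeping you flag (that one also needs $A_m=o\bigl(\sum_{j<m}B_j\bigr)$, handled via $A_m\le(1+\varepsilon)B_m$ for large $m$) is the only place one might slip, and you address it. There is nothing in the paper to compare your argument against.
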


Let us denote the $j$th appearance of $X_i$ in the bases of $Q$ by $X_{i,j}$. In particular, this will consist of the bases $q_n$ where $n$ falls into the following interval
\[
[N_{i,j},M_{i,j}]:=\left[ \sum_{k=1}^{i-1} 2 L_k \ell_k + 2(j-1)\ell_k+1, \sum_{k=1}^{i-1}2L_k \ell_k +2j \ell_k\right].
\]

Let us write 
\[
Q^{(k)}(X_{i,j}) = \sum_{n=N_{i,j}}^{M_{i,j}} \frac{1}{q_n q_{n+1} q_{n+2}\cdots q_{n+k-1}}
\]
and let $N(B,\taursx,X_{i,j})$ denote the number of occurrences of the block of digits $B$ in the $Q$-Cantor series expansion of $\taursx$ with the first digit of the block occurring at the $n$th place, with $n\in [N_{i,j},M_{i,j}]$.

Comparing these two definitions with the definition of $Q$-normality in \refeq{rnormal}, and using Lemma \ref{lem:limitreduction}, we see that it suffices to show that
\begin{equation}\label{eq:final1}
N(B,\taursx,X_{i,j})= Q^{(k)}(X_{i,j})(1+o(1))
\end{equation}
as $i$ increases (uniformly for any $j\in [1,L_i]$) and that
\begin{equation}\label{eq:final2}
Q^{(k)}(X_{i,j}) = o\left( \sum_{l=1}^{L_{i-1}} Q^{(k)}(X_{i-1,l})\right)
\end{equation}
as $i$ increases.

To estimate the size of $Q^{(k)}(X_{i,j})$, we note that most of the contribution comes from the terms when $q_n=q_{n+1}=\dots=q_{n+k-1}=i$. There are precisely $\ell_i(n_i-k)$ such terms. If any of the $q$'s in the denominator of a term equals $(i!)^2$ (or, possibly $(i+1)!^2$), then the entire term is at most $i^{-k+1}(i!)^{-2}$. And there are precisely $\ell_i(n_i+k)$ such summands. Therefore,
\begin{equation}\label{eq:final3}
Q^{(k)}(X_{i,j}) = \frac{\ell_i(n_i-k)}{i^k} +O\left( \frac{\ell_i(n_i+k)}{i^{k-1} (i!)^2}\right) =  \frac{\ell_in_i}{i^k}(1+o(1))
\end{equation}
where the $o(1)$ is decreasing as $i$ increases and is uniform over $j\in [1,L_i]$. 

From this, we derive
\begin{equation}\label{eq:final4}
\sum_{j=1}^{L_{i-1}} Q^{(k)}(X_{i-1,j}) = L_{i-1} \frac{\ell_{i-1}n_{i-1}}{(i-1)^k}(1+o(1))
\end{equation}
and therefore \eqref{eq:final2} derives from comparing \eqref{eq:final3} and \eqref{eq:final4} and using the definition of $L_{i-1}$.

To estimate the size of $N(B,\taursx,X_{i,j})$, let us suppose that $i$ is sufficiently large so that the digits of $B$ are less than $i$ and so that all the digits of $\taursx$ corresponding to the large bases $(i!)^2$ are at least $i$ in size. Therefore $B$ will only occur in the digit strings corresponding to the small blocks $[i]^{n_i}$. We know that there are $\ell_b$ such distinct digit strings and at most $i^{n_i}e^{-n_i^{1/5}}$ of them can not be $(\epsilon_i,k)$-normal. Therefore, we have
\begin{equation}\label{eq:final5}
N(B,\taursx,X_{i,j}) = \left(i^{-k}+O(\epsilon_i) \right)n_i \ell_i + O\left( n_i i^{n_i}e^{-n_i^{1/5}}\right)= \frac{n_i\ell_i}{i^k} (1+o(1)).
\end{equation}
As before, the $o(1)$ here is decreasing as $i$ increases.

Comparing \eqref{eq:final3} and \eqref{eq:final5} gives \eqref{eq:final1} and completes the proof.

%%%%%%%%%%%%%%%%%%%%%%%%%%%%%%
\section{Proof of Theorem \ref{thm:computability}}
%%%%%%%%%%%%%%%%%%%%%%%%%%%%%%

We shall, in fact, prove the following, more explicit theorem.

\begin{thm}
The basic sequence $Q$ given in \eqref{eq:Qconstruction} is computable.  Let $\eta = 0. E_1 E_2 \cdots$ \wrt ${Q}$ be the real number from the set $\Theta(\alpha, \beta, s, t, \upsilon, F, I)$ given in Section \ref{sec:mainproof} such that $E_n=i_\alpha(n)!$ if \refeq{VNcond} holds (that is, the digits corresponding to the bases $(i!)^2$ will be $i!$).
\end{thm}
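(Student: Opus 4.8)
The plan is to verify the two assertions separately: first that the basic sequence $Q$ from \eqref{eq:Qconstruction} is computable as a sequence of integers, and then that the specific real number $\eta$ is a computable real number lying in $\Theta(\alpha, \beta, s, t, \upsilon, F, I)$ (hence, by \reft{main} and the preceding section, in $\urset$). For the computability of $Q$, I would exhibit a total recursive function $f \colon \mathbb{N} \to \mathbb{N}$ outputting $q_n$. The point is that every quantity entering the construction — $n_i = i^{\lfloor \log i\rfloor}$, $\epsilon_i$, the set $\mathcal{L}_i$ and its size $\ell_i$, and then $L_i = i!\lceil n_{i+1}\ell_{i+1}/(n_i\ell_i)\rceil$ — is obtained from $i$ by finitely many arithmetic operations, comparisons, and a bounded search (enumerating blocks $B$ of length $n_i$ in base $i$ and testing $i!\mid \overline{B}$ and $i!\overline{B} < i^{n_i}$). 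Thus $i\mapsto (n_i,\ell_i,L_i)$ is recursive, and from the block structure $Q = [X_2]^{L_2}[X_3]^{L_3}\cdots$ with $X_i = \big[[i]^{n_i}[(i!)^2]^{n_i}\big]^{\ell_i}$ one computes, for a given $n$, which block $X_{i,j}$ and which position within it the index $n$ falls into (a finite search since the partial sums $\sum_{k<i}2L_k\ell_k$ are computable and increase to infinity), and reads off whether $q_n$ equals $i$ or $(i!)^2$.

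For $\eta$, I would first check membership in $\Theta(\alpha,\beta,s,t,\upsilon,F,I)$: the defining condition is $E_n \in V(n)$ for all $n$, where $V(n)$ is either $I_{i(n)}$ or the singleton-interval $[F_{G(n)},F_{G(n)}+1)$ according to \refeq{VNcond}. The prescribed digit rule is: when \refeq{VNcond} holds (the position corresponds to a base $(i!)^2$), set $E_n = i_\alpha(n)! = i!$, and when it fails (position corresponds to a base $i$, the $j$th such occurrence in a copy of $X_i$) set $E_n$ equal to the $j$th block of $\mathcal{L}_i$ in lexicographic order. I need $i! \in I_i$: since $I_i = \{1,\dots,\lfloor\beta_i^{1-\log(i)^{-1}}\rfloor\}\cap (\lfloor\sqrt i\rfloor!)\mathbb{Z}$ and $i!$ is divisible by $\lfloor\sqrt i\rfloor!$ while $i! = o(\beta_i^{1-\log(i)^{-1}})$ (indeed $\log(i!) \sim i\log i$ whereas $\log\beta_i^{1-\log(i)^{-1}} \sim 2i\log i$), this holds for all $i\ge 2$; the small exceptional cases can be handled by hand since the smallest base is $2$. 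Likewise the chosen $\mathcal{L}_i$-block lies in the required singleton interval by the very definition of $F$ in Section \ref{sec:mainproof}. So $\eta \in \Theta(\alpha,\beta,s,t,\upsilon,F,I)$, and by the main proof $\eta \in \urset$.

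Finally, to see $\eta$ is a computable real number: having shown $Q$ is computable and that $n\mapsto E_n$ is recursive (for $E_n$ one computes $i(n)$, tests \refeq{VNcond}, and either outputs $i!$ or performs the bounded lexicographic enumeration of $\mathcal{L}_i$ to find the $j$th element, where $j$ is the occurrence index computed from $n$ exactly as in the computation of $q_n$), the partial sums $s_N = \sum_{n=1}^N E_n/(q_1\cdots q_n)$ are computable rationals, and the tail is bounded by $\sum_{n>N} 1/(q_1\cdots q_n) \le 2/(q_1\cdots q_N) \to 0$ since $q_n \ge 2$. Hence for any target accuracy one finds $N$ with $|s_N - \eta|$ small, which yields the total recursive function witnessing computability of $\eta$ in the sense defined in the introduction. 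I expect the only genuinely delicate point to be bookkeeping: making the "which block $X_{i,j}$, which internal position, which occurrence index $j$" decoding rigorous and confirming its totality and recursiveness — everything else is a direct divisibility and growth-rate check, and the bounded searches are manifestly effective because all the relevant partial sums are computable and strictly increasing to $\infty$.
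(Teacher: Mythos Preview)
Your proposal is correct and follows essentially the same approach as the paper's own proof: both arguments establish that each of the auxiliary sequences $(n_i)$, $(\ell_i)$, $(L_i)$ is computable by bounded search and arithmetic, then decode the block structure $Q=[X_2]^{L_2}[X_3]^{L_3}\cdots$ to compute $q_n$ and $E_n$, and conclude that $\eta$ is a computable real. The paper's proof is slightly terser---it simply asserts that computability of $(E_n)$ and $(q_n)$ yields computability of $\eta$ without writing out the tail estimate, and it does not separately verify that $i!\in I_i$ (membership in $\Theta$ is taken as part of the hypothesis rather than checked)---so your extra verification of membership and your explicit partial-sum bound are additions rather than deviations.
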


\begin{proof}

The sequence $\floor{\log(i)}$ is computable,  so $n_i=i^{\floor{\log(i)}}$ is a computable sequence. 
We can create a Turing machine that, given input $i$, lexicographically enumerates all integers in $[0, i^{n_i}-1]$.
Moreover, we  use two Turing machines that, given input $i$ and the list of integers, check if each integer $\overline{B}$ satisfies the conditions $i! \overline{B} < i^{n_i}$ and $i! | \overline{B}$ since the order relation on integers and divisibility of integers are computable relations. 
We can then create a Turing machine that, given input $i$, lexicographically enumerates the elements of $\mathcal{L}_i$. 
Another Turing machine can be used to output the size of $\mathcal{L}_i$. Thus, $(l_i)$ is a computable sequence. 
Since $(n_i)$ and $(l_i)$ are computable sequences, the sequence $(L_i)$ is also  computable. Furthermore, $( 2 L_i l_i n_i )$ is also a computable sequence.

Thus the sequences $(\alpha_i)$, $(\beta_i)$, $(s_i)$, $(t_i)$, and $(\upsilon_i)$ are all computable sequences. 
Therefore we can create a Turing machine $A$ to output the $n$th term of $Q(\alpha, \beta, s, t, \upsilon)$ as follows. 
First make a Turing machine $B$ that on inputs $i$ and $n$ will output the $n$th base of $X_i$ as follows. 
Determine the residue class of $n$ modulo $2 n_i$. 
If this residue is less than $n_i$, return $i$, otherwise return $(i!)^2$. 
This computes the $n$th digit of $X_i$. 
Finally, create the Turing machine $C$ that on input $n$ determines the maximum $i$ such that $2 L_i l_i n_i < n$ and computes $N = n - \sum_{j=1}^i 2 L_i l_i n_i$.
Then define $A$ as the Turing machine that on input $n$ computes $B(C(n))$. 
Thus, we have a Turing machine the outputs the $n$th base of $Q$, so $Q$ is a computable sequence.

By an argument from the previous paragraphs, we have that there is a Turing machine that on input $i$ lexicographically enumerates $\mathcal{L}_i$. 
We can construct a Turing machine to compute the sequence $(E_n)$ as follows. 
Use the Turing machine $D$ that on input $n$ outputs $(m,N)$ where $m = \min\{ j : \sum_{k=1}^j 2L_i l_i n_i < n \}$ and $N = n - \sum_{j=1}^m 2 L_j l_j n_j$. 
Create a new Turing machine $E$ that on input $i$ and $n$ does the following. 
If the residue class of $n$ modulo $2 n_i$ is greater than or equal to $n_i$, output $i!$. 
Otherwise, compute $z = \floor{n/(2 n_i)}$ and return the $n \mod n_i$th digit of the $z$th element of $\mathcal{L}_i$. 
Then the Turing machine that on input $n$, runs the $D$ on $n$, and then runs $E$ on the output of the $D$, computes the sequence $(E_n)$. 
Since both $(E_n)$ and $(q_n)$ are computable sequences, the real number $\eta = \sum_{n=1}^\infty \frac{E_n}{q_1 \cdots q_n}$ is computable.
\end{proof}

\section{Further problems}

The effect of the rational number $s$ on the set we constructed to prove Theorem \ref{thm:main} was negligible. We specifically constructed $Q$ so that the denominator of $s$ had to divide some $q_n$, so addition by $s$ would never change more than a finite amount of digits by Corollary \ref{lem:rationaladd}, and thus had no impact on either $Q$-normality or $Q$-distribution normality (or the lack thereof).  This suggests the following natural question.
\begin{problem}
If we were to restrict $Q$ so that, say $3\not|~q_n$ for any $n$, then addition by $1/3$ would have to change an infinite number of digits. Are results similar to those given here possible for such $Q$?
\end{problem}
We also ask
\begin{problem}
 Does a version of \reft{main} hold for all $Q$  that are infinite in limit and fully divergent?
\end{problem}

\begin{problem}
There exist some basic sequences $Q$ where the set $\DNQ$ does not contain any computable real numbers.  See \cite{BerosBerosComputable}.  What assumptions on $Q$ must we have to guarantee that there are computable real numbers in $\urset$?
\end{problem}

\begin{problem}
Can a version of \reft{main} or \reft{computability} be stated for normality of order $k$?
\end{problem}

%\begin{thebibliography}{10}

\bibliographystyle{amsplain}

%\bibliography{mance} 
\providecommand{\bysame}{\leavevmode\hbox to3em{\hrulefill}\thinspace}
\providecommand{\MR}{\relax\ifhmode\unskip\space\fi MR }
% \MRhref is called by the amsart/book/proc definition of \MR.
\providecommand{\MRhref}[2]{%
  \href{http://www.ams.org/mathscinet-getitem?mr=#1}{#2}
}
\providecommand{\href}[2]{#2}

%\end{thebibliography}

\end{document}